\documentclass[letterpaper, 10 pt, conference]{ieeeconf}  

\IEEEoverridecommandlockouts                              
\overrideIEEEmargins


\usepackage{bbm}
\usepackage{mathrsfs}
\usepackage{verbatim}
\usepackage{amsmath}
\usepackage{amsfonts}
\usepackage{bm}
\usepackage{graphicx}
\usepackage{float}

\usepackage{amsthm}
\usepackage{xcolor}
\usepackage[colorinlistoftodos]{todonotes}
\usepackage{mathtools}
\usepackage{cite}
\usepackage{colortbl}
\usepackage{subfigure}
\allowdisplaybreaks

\theoremstyle{definition}
\newtheorem{assumption}{Assumption}
\newtheorem{definition}{Definition}
\newtheorem{remark}{Remark}
\newtheorem{problem}{Problem}

\theoremstyle{plain}

\newtheorem{theorem}{Theorem}
\newtheorem{lemma}{Lemma}

\let\inf\relax 
\DeclareMathOperator*\inf{\vphantom{p}inf}

\pagestyle{plain}
\pagenumbering{arabic}

\title{\LARGE \bf Worst-Case Control and Learning Using Partial Observations Over an Infinite Time Horizon}

\author{Aditya Dave, Ioannis Faros, Nishanth Venkatesh, {\itshape{Student Members, IEEE,}} \\ Andreas A. Malikopoulos, {\itshape{Senior Member, IEEE}} 
	\thanks{This research was supported by NSF under Grants CNS-2149520 and CMMI-2219761.} %
	\thanks{The authors are with the Department of Mechanical Engineering, University of Delaware, Newark, DE 19716 USA (email: \texttt{adidave@udel.edu; ifaros@udel.edu; nish@udel.edu; andreas@udel.edu).}} }

\begin{document}

\maketitle
\thispagestyle{empty}

\begin{abstract}
Safety-critical cyber-physical systems require control strategies whose worst-case performance is robust against adversarial disturbances and modeling uncertainties.
In this paper, we present a framework for approximate control and learning in partially observed systems to minimize the worst-case discounted cost over an infinite time horizon. 
We model disturbances to the system as finite-valued uncertain variables with unknown probability distributions. For problems with known system dynamics, we construct a dynamic programming (DP) decomposition to compute the optimal control strategy. 
Our first contribution is to define information states that improve the computational tractability of this DP without loss of optimality.
Then, we describe a simplification for a class of problems where the incurred cost is observable at each time instance. 
Our second contribution is defining an approximate information state that can be constructed or learned directly from observed data for problems with observable costs. We derive bounds on the performance loss of the resulting approximate control strategy and illustrate the effectiveness of our approach in partially observed decision-making problems with a numerical example.
\end{abstract}

\section{Introduction}
\label{section:Introduction}

Cyber-physical systems, such as connected and automated vehicles \cite{Malikopoulos2020} and power systems \cite{zhu2011robust}, often require decision-making in uncertain environments with partial knowledge of the dynamics \cite{Malikopoulos2022a} over long time horizons. This decision-making challenge is typically modeled with a \textit{stochastic formulation}, where an agent can access a prior probability distribution for all uncertainties and computes a control strategy to minimize the expected value of a discounted total cost across an infinite time horizon \cite{Dave2021nestedaccess}. 
However, the actual performance of such a strategy degrades when the given prior distribution is different from the actual underlying distribution \cite{mannor2007bias}. To mitigate this drawback, research efforts have proposed alternatives, including \textit{(1) robust stochastic formulations,} where an agent minimizes the worst-case expected cost given a set of feasible probability distributions \cite{iyengar2005robust, wiesemann2013robust};
and \textit{(2) risk-averse formulations,} where an agent minimizes a combination of both the expected cost and the cost variance \cite{mihatsch2002risk, baauerle2017partially}. 
While these formulations improve the performance under a distribution mismatch, many safety-critical applications require further guarantees on the worst-case performance of a strategy against either adversarial attacks or system failure, e.g., cyber-security \cite{rasouli2018scalable} and cyber-physical networks \cite{shoukry2013minimax}. 
A \textit{non-stochastic formulation} is suitable in such applications, where the agent has no knowledge of the distributions on the uncertainties and uses only their set of feasible values to compute a control strategy that minimizes the \textit{maximum} possible cost \cite{bertsekas1973sufficiently, bernhard2000max,  gagrani2017decentralized, coraluppi1999risk, Dave2021minimax, Dave2023approximate}. This non-stochastic formulation is both maximally robust for a given set of uncertainties and most risk-averse under any feasible prior distribution \cite{james1994risk}.


In this paper, we analyze a non-stochastic problem over an infinite time horizon with an agent that can access only output data and may not know the underlying state-space model. In such partially observed problems, an optimal strategy can be computed using a memory-based dynamic program (DP) when the time horizon is finite. However, as time tends to infinity, the agent's memory grows to an infinite-dimensional vector. This makes a memory-based approach computationally intractable and necessitates an alternate solution. When the state-space model is known, this challenge is alleviated using the maximum cost-to-come as an \textit{information state} in the DP, for both finite-time \cite{bernhard2003minimax} and infinite-time problems \cite{baras1998robust}. The computational tractability of this DP has been further improved in finite-time problems using \textit{approximate information states} \cite{Dave2022approx, dave2022additive}. Meanwhile, general notions of information states and their approximations have been developed for stochastic problems over an infinite time-horizon without relying on state-space models \cite{subramanian2022approximate}. 
In contrast, to the best of our knowledge, no such general notions exist for infinite-time problems to minimize the worst-case discounted total cost. 


Our main contributions in this paper are: (1) we introduce general information states (Definition \ref{def_information_state}) and a time-invariant DP to compute an optimal strategy in non-stochastic problems over an infinite time horizon (Theorem \ref{thm_g_opt});
(2) we specialize information states for problems with observable costs (Definition \ref{def_info_specialized} and Theorem \ref{thm_specialized_bounds}) and define approximate information states (Definition \ref{def_approximate_info}) to compute a strategy with a bounded performance loss (Theorem \ref{thm_approx}); 
and (3) using a numerical example, we show that approximate information states can be learned directly from output data with incomplete access to system dynamics and, subsequently, compute an approximate strategy using deep Q-learning (Section \ref{section:example}).

The remainder of the paper proceeds as follows. In Section \ref{section:problem}, we present our formulation. In Section \ref{section:DP_and_info}, we define information states and the corresponding DP. In Section \ref{section:perfectly_observed}, we specialize our results to observable costs, define approximate information states, and derive performance bounds. 
In Section \ref{section:example}, we demonstrate our results in a numerical example, and in Section \ref{section:conclusion}, we draw concluding remarks. 

\subsection{Notation and Preliminaries}
\label{subsection:Notation}

In our exposition, we use the mathematical framework of uncertain variables \cite{nair2013nonstochastic} and cost distributions \cite{bernhard2003minimax,dave2022additive}:

\vspace{2pt}

\textbf{1) Cost Measures:}
Consider a sample space $\Omega$ with a sigma algebra $\mathcal{B}(\Omega)$. A cost measure is the non-stochastic analogue of a probability measure. Specifically, it is a function $q: \mathcal{B}(\Omega) \to \{-\infty\} \cup (-\infty,0]$ satisfying the properties: (1) $q(\Omega) = 0$, (2) $q(\emptyset) = - \infty$, and (3) $q(B) = \sup_{\omega \in B} q(\omega)$ for all $B \in \mathcal{B}(\Omega)$, where $\sup_{\omega \in \emptyset} q(\omega) := - \infty$. Furthermore, for two sets $B^1, B^2 \in \mathcal{B}(\Omega)$ with $q(B^2) > - \infty$, the conditional cost measure of $B^1$ given $B^2$ is 
$q(B^1\,| \,B^2) := q(B^1, B^2) - q(B^2),$ where $q(B^1, B^2) = \sup_{\omega \in B^1 \cap B^2} q(\omega).$ 

\vspace{2pt}

\textbf{2) Uncertain Variables:}
For a set $\mathscr{X}$, an uncertain variable is a mapping $\mathsf{X}: \Omega \to \mathscr{X}$ and is compactly denoted by $\mathsf{X} \in \mathscr{X}$. This is the non-stochastic equivalent of a random variable. For any $\omega \in \Omega$, its realization is $\mathsf{X}(\omega) = \mathsf{x} \in \mathscr{X}$. 
Its \textit{marginal range} is the set of feasible realizations $[[\mathsf{X}]] \hspace{-1pt} := \hspace{-1pt} \{\mathsf{X}(\omega) \,| \, \omega \in \Omega\} \subseteq \mathscr{X}$.
The \textit{cost distribution} is
$q(\mathsf{x}) := \sup_{\omega \in \{\Omega|\mathsf{X}(\omega) = \mathsf{x}\}} q(\omega)$ for all $\mathsf{x} \in [[\mathsf{X}]]$.
The \textit{joint range} of two uncertain variables $\mathsf{X} \in \mathscr{X}$ and $\mathsf{Y} \in \mathscr{Y}$ is the set of feasible simultaneous realizations $[[\mathsf{X},\mathsf{Y}]] \hspace{-1pt} := \hspace{-1pt} \big\{ \big(\mathsf{X}(\omega), \mathsf{Y}(\omega) \big) \,| \, \omega \in \Omega \big\} \subseteq \mathscr{X} \times \mathscr{Y}$. The two uncertain variables are \textit{independent} if $[[\mathsf{X},\mathsf{Y}]] = [[\mathsf{X}]] \times [[\mathsf{Y}]]$. The \textit{conditional range} of $\mathsf{X}$ given a realization $\mathsf{y}$ of $\mathsf{Y}$ is the set $[[\mathsf{X}|\mathsf{y}]] \hspace{-1pt} := \hspace{-1pt} \big\{ \mathsf{X}(\omega) \; | \; \mathsf{Y}(\omega) = \mathsf{y}, \; \omega \in \Omega \big\}$. 
The cost distribution of any $\mathsf{x} \in [[\mathsf{X}]]$ given a realization $\mathsf{y} \in [[\mathsf{Y}]]$ with $q(\mathsf{y}) > - \infty$ is $q(\mathsf{x}\,| \,\mathsf{y}) = q(\mathsf{x},\mathsf{y}) - q(\mathsf{y})$, where $q(x,y) = \sup_{\omega \in \{\Omega|X(\omega)=x, Y(\omega) = y\}} q(\omega)$.

\vspace{2pt}

\textbf{3) Hausdorff Distance:}
Consider two bounded, non-empty subsets $\mathscr{X}, \mathscr{Y}$ of a metric space $(\mathscr{S}, \eta)$, where $\eta: \mathscr{X} \times \mathscr{Y} \to \mathbb{R}_{\geq0}$ is the metric. The Hausdorff distance between $\mathscr{X}$ and $\mathscr{Y}$ is the pseudo-metric \cite[Chapter 1.12]{barnsley2006superfractals}:
\begin{align} \label{H_met_def}
    \hspace{-4pt} \mathcal{H}(\mathscr{X}, \mathscr{Y}) \hspace{-2pt} := \hspace{-2pt} \max \hspace{-1pt} \big\{ \hspace{-1pt} \sup_{\mathsf{x} \in \mathscr{X}} \hspace{-1pt} \inf_{\mathsf{y} \in \mathscr{Y}}  \eta(\mathsf{x}, \mathsf{y}), \sup_{\mathsf{y} \in \mathscr{Y}} \hspace{-1pt} \inf_{\mathsf{x} \in \mathscr{X}}  \eta(\mathsf{x}, \mathsf{y}) \hspace{-2pt} \big\}. \hspace{-4pt}
\end{align} 
Furthermore, if $f: \mathscr{S} \to \mathbb{R}$ is a Lipschitz continuous function with a constant $L_f \in \mathbb{R}_{\geq0}$, then \cite[Lemma 5]{Dave2023approximate}:
\begin{align} \label{H_met_property}
    \Big|\sup_{x \in \mathscr{X}}f(x) - \sup_{y \in \mathscr{Y}}f(y)\Big| \leq L_f \cdot \mathcal{H}(\mathscr{X}, \mathscr{Y}).
\end{align}

\section{Problem Formulation} \label{section:problem}

We consider the control of an uncertain system which evolves in discrete time steps. At each time $t \in \mathbb{N} = \{0,1,2,\dots\}$ an agent collects an observation on the system as the uncertain variable $Y_t \in \mathcal{Y}$ and generates a control action denoted by the uncertain variable $U_t \in \mathcal{U}$. After generating the action at each $t$, the agent incurs a cost denoted by the uncertain variable $C_t \in \mathcal{C} \subset \mathbb{R}_{\geq0}$. The set $\mathcal{C}$ is bounded by $\min\{\mathcal{C}\} = c^{\min}$ and $\max\{\mathcal{C}\} = c^{\max}$. We formulate our problem for a general case where the agent may not have knowledge of a state-space model for the system. Thus, we use an \textit{input-output model} to describe the evolution of the system, as follows. At each ${{t \in \mathbb{N}}}$, the system receives two inputs: the action $U_t$, and an uncontrolled disturbance $W_t \in \mathcal{W}$. The disturbances $\{W_t\,| \,{{t \in \mathbb{N}}}\}$ constitute a sequence of independent uncertain variables.
After receiving the inputs at each time $t \in \mathbb{N}$, the system generates two outputs: (1) the observation $Y_{t+1} = h_{t+1}(W_{0:t}, U_{0:t})$, where $h_{t+1}: \mathcal{W}^t \times \mathcal{U}^t \to \mathcal{Y}$ is the observation function; and (2) the cost $C_t = d_t(W_{0:t}, U_{0:t})$, where $d_t: \mathcal{W}^t \times \mathcal{U}^t \to \mathcal{C}$ is the cost function. The initial observation is generated as $Y_0 = h_0(W_0)$.

The agent perfectly recalls all observations and control actions and at each ${{t \in \mathbb{N}}}$, the agent's memory is the uncertain variable $M_t := (Y_{0:t}, U_{0:t-1})$ taking values in $\mathcal{M}_t := \mathcal{Y}^t \times$ $ \mathcal{U}^{t-1}$. The agent uses a control law $g_t: \mathcal{M}_t \to \mathcal{U}$ to generate the action $U_t = g_t(M_t)$ as a function of the memory. The control strategy is the collection of control laws $\boldsymbol{g} := (g_0, g_1,\dots)$ with a feasible set $\mathcal{G}$. The performance of a strategy $\boldsymbol{g} \in \mathcal{G}$ is given by the \textit{worst-case discounted cost},
    \begin{align} \label{eq_instantaneous_criterion}
    \mathcal{J}(\boldsymbol{g}) := \lim_{T \to \infty} \sup_{c_{0:T} \in [[C_{0:T}]]^{\boldsymbol{g}}} \sum_{t=0}^T \gamma^t {\cdot} c_t,
    \end{align}
where $\gamma \in (0,1)$ is a discount parameter, the marginal range $[[C_{0:T}]]^{\boldsymbol{g}}$ is the set of all feasible costs consistent with the strategy $\boldsymbol{g}$ and with the set of feasible disturbances $\mathcal{W}$. The limit in \eqref{eq_instantaneous_criterion} is well defined because $C_t \leq c^{\max}$ for all $t$. Next, we define the control problem with known dynamics.

\begin{problem} \label{problem_1}
The optimization problem is to derive the infimum value $\inf_{\boldsymbol{g} \in \mathcal{G}} \mathcal{J}(\boldsymbol{g}),$
given the feasible sets $\{\mathcal{U}, \mathcal{W}, \mathcal{Y}, \mathcal{C}\}$ and the functions $\{h_t, d_t \,| \, {{t \in \mathbb{N}}} \}$.
\end{problem}

If the minimum value is achieved in Problem \ref{problem_1}, the minimizing argument $\boldsymbol{g}^* = \arg \min_{\boldsymbol{g} \in \mathcal{G}} \mathcal{J}(\boldsymbol{g})$ is called an optimal control strategy. Our aim is to tractably compute the optimal value and an optimal strategy, if one exists. We impose the following assumption in our analysis.

\begin{assumption} \label{assumption_1}
We consider that the sets $\{\mathcal{U},\mathcal{W},\mathcal{Y}\}$ are each bounded subsets of a metric space $(\mathscr{S}, \eta)$ and $\mathcal{C}$ is a bounded subset of $\mathbb{R}_{\geq0}$.
\end{assumption}

Assumption \ref{assumption_1} ensures that all uncertain variables take values in bounded sets and that we can use the Hausdorff pseudo-metric \eqref{H_met_def} as a distance measure between them. 

\begin{remark}
We first derive results for Problem \ref{problem_1} with known dynamics. However, our main results in Section \ref{section:perfectly_observed} are also suitable for reinforcement learning problems with unknown dynamics. We illustrate this with an example in Section \ref{section:example}.
\end{remark}

\section{Dynamic Program and Information States} \label{section:DP_and_info}

In this section, we first present value functions to evaluate the performance of any strategy $\boldsymbol{g} \in \mathcal{G}$. Next, we present a memory-based DP decomposition of Problem \ref{problem_1} that approximately computes the value functions with arbitrary precision. However, because the memory grows in size with time, this DP suffers from exponentially increasing computation with an increase in precision. To alleviate this computational challenge, we present the notion of information states in Subsection \ref{subsection:basic_info_states}.
To construct value functions, we first define the \textit{accrued cost} at each ${{t \in \mathbb{N}}}$ as the sum of past incurred costs $A_t := \sum_{\ell=0}^{t-1} \gamma^\ell {\cdot} C_{\ell},$
which satisfies $A_{t+1} = A_t + \gamma^t{\cdot} C_t$ with $A_0 := 0$. 
This is well defined in the limit $t \to \infty$ because $\lim_{t \to \infty} A_t \leq \lim_{t \to \infty} \sum_{\ell=0}^{t-1} \gamma^{\ell} {\cdot} c^{\max} = \frac{c^{\max}}{1-\gamma} =: a^{\max}$. Thus, $A_t \in [0, a^{\max}]$ for all ${{t \in \mathbb{N}}}$. Similarly, the \textit{cost-to-go} at any ${{t \in \mathbb{N}}}$ is the sum of future all costs still to be incurred $C^{\infty}_t := \sum_{\ell = t}^{\infty} \gamma^{\ell-t} {\cdot} C_{\ell}.$
Note that $C^{\infty}_t \in [0, a^{\max}]$ for all $t$ and that 
$C^{\infty}_{t} = C_t +  \gamma {\cdot} C^{\infty}_{t+1}$. Then, for all ${{t \in \mathbb{N}}}$, we can define a value function for any $\boldsymbol{g} \in \mathcal{G}$ as 
    \begin{gather} \label{value_function_t_g}
        V_t^{\boldsymbol{g}}(m_t) := \sup_{a_t, c^{\infty}_{t} \in [[A_t, C^{\infty}_{t}|m_t]]^{\boldsymbol{g}}} \left(a_t + \gamma^t {\cdot} c^{\infty}_t \right),
    \end{gather}
where $[[A_t, C^{\infty}_{t}|m_t]]^{\boldsymbol{g}}$ is the conditional range induced by the choice of strategy $\boldsymbol{g}$. 
From the definition of the value functions, at $t=0$ it holds that $\sup_{y_0 \in \mathcal{Y}}V_0^{\boldsymbol{g}}(y_0) = \mathcal{J}({\boldsymbol{g}})$, where $m_0 = y_0$. 
Thus, the value function $V_0^{\boldsymbol{g}}(y_0)$ evaluates the performance of any strategy $\boldsymbol{g}$ for an initial observation $y_0$. Similarly, the optimal value function at each ${{t \in \mathbb{N}}}$ is
\begin{gather} \label{value_function_t_opt}
    V_t(m_t) := \inf_{\boldsymbol{g} \in \mathcal{G}} V_t^{\boldsymbol{g}}(m_t),
\end{gather}
and the optimal value 
is $\inf_{\boldsymbol{g} \in \mathcal{G}} \mathcal{J}(\boldsymbol{g}) = \sup_{y_0 \in \mathcal{Y}} V_0(y_0)$. 

Given the value functions in \eqref{value_function_t_g} and \eqref{value_function_t_opt}, we can evaluate the performance of a strategy and compare it with the optimal performance. However, there is no natural DP decomposition to compute these value functions in an infinite-horizon system with no terminal time. 
Thus, we construct a memory-based DP that assumes a finite horizon $T \in \mathbb{N}$ and use it to recursively compute approximations of the value functions. For any $\boldsymbol{g} \in \mathcal{G}$, we define finite-horizon evaluation functions for all $m_t \in \mathcal{M}_t$ and each $t=0,\dots,T-1$ as
    \begin{gather}
        J_{t}^{\boldsymbol{g}}(m_t; T) := \sup_{m_{t+1} \in [[M_{t+1}|m_t]]^{\boldsymbol{g}}} J_{t+1}^{\boldsymbol{g}}(m_{t+1}; T),
    \end{gather}
where $J_{T}^{\boldsymbol{g}}(m_T;T) \hspace{-1pt} := \hspace{-1pt} \sup_{a_T, c_{T} \in [[A_T, C_{T}|m_T]]^{\boldsymbol{g}}} \hspace{-1pt} (a_T + \gamma^T {\cdot} c_T)$. Similarly, we define approximately optimal finite-horizon functions for all $m_t \in \mathcal{M}_t$ and each $t=0,\dots, T-1$ as
    \begin{gather} \label{eq_DP_memory_opt}
        J_{t}(m_t; T) := \inf_{u_t \in \mathcal{U}}\sup_{m_{t+1} \in [[M_{t+1}|m_t, u_t]]} J_{t+1}(m_{t+1}; T),
    \end{gather}
where, $J_{T}(m_T;T) := \inf_{u_T \in \mathcal{U}} \sup_{a_T, c_{T} \in [[A_T, C_{T}|m_T, u_T]]}$ $(a_T + \gamma^T {\cdot} c_T)$. 
Note that the finite-horizon functions $J_t^{\boldsymbol{g}}(m_t; T)$ and $J_t(m_t; T)$ at any $t=0,\dots,T$ are parameterized by the choice of horizon $T \in \mathbb{N}$. Next, we bound the approximation error between the value functions and their finite-horizon counterparts.


\begin{lemma} \label{finite_DP_memory}
For any finite horizon $T \in \mathbb{N}$ and for all $m_t \in \mathcal{M}_t$ and each $t=0,\dots,T$, 
    \begin{align} 
        \hspace{-8pt} \textbf{a) }\dfrac{\gamma^{T+1} \hspace{-1pt} {\cdot} c^{\min}}{1-\gamma} \hspace{-2pt} + \hspace{-2pt} J_{t}^{\boldsymbol{g}}(m_t; \hspace{-1pt} T) &\leq \hspace{-1pt} V_t^{\boldsymbol{g}}(m_t) \nonumber \\
        &\leq \hspace{-1pt} J_{t}^{\boldsymbol{g}}(m_t; \hspace{-1pt} T) \hspace{-2pt} + \hspace{-2pt}\dfrac{\gamma^{T+1} \hspace{-1pt} {\cdot} c^{\max}}{1-\gamma}, \label{eq_finite_DP_memory_g} \\
        \hspace{-10pt} \textbf{b) } \dfrac{\gamma^{T+1} \hspace{-1pt}  {\cdot} c^{\min}}{1-\gamma} \hspace{-1pt} + \hspace{-1pt} J_{t}(m_t ; \hspace{-1pt} T) \hspace{-1pt}  &\leq \hspace{-1pt}  V_t(m_t) \nonumber \\
        &\leq \hspace{-1pt}  J_{t}(m_t; \hspace{-1pt} T) \hspace{-1pt}  + \hspace{-1pt} \dfrac{\gamma^{T+1} \hspace{-1pt}  {\cdot} c^{\max}}{1-\gamma}. \hspace{-4pt} \label{eq_finite_DP_memory_opt}
    \end{align}
\end{lemma}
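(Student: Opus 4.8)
The plan is to prove parts (a) and (b) by the same argument, since both compare a value function (an infinite sum of discounted costs) against a finite-horizon surrogate that truncates at time $T$. I would start from part (a), the strategy-dependent case, and then observe that part (b) follows by an entirely parallel computation with $\inf_{u}\sup_{m_{t+1}}$ replacing $\sup_{m_{t+1}}$ at each stage and with $V_t,J_t$ in place of $V_t^{\boldsymbol g},J_t^{\boldsymbol g}$. For part (a), the key observation is that both $V_t^{\boldsymbol g}(m_t)$ and $J_t^{\boldsymbol g}(m_t;T)$ are obtained by propagating a supremum over the reachable memory realizations backward from time $T$ (or $\infty$) to time $t$; the two differ only in the terminal contribution. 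Specifically, $V_t^{\boldsymbol g}$ carries the full cost-to-go $a_t+\gamma^t c^\infty_t$, whereas $J_t^{\boldsymbol g}(\cdot;T)$ effectively carries $a_T+\gamma^T c_T$ at the horizon, i.e. it omits the tail $\sum_{\ell=T+1}^{\infty}\gamma^{\ell}\cdot C_\ell$. Since each $C_\ell\in[c^{\min},c^{\max}]$, that omitted tail lies in the interval $\bigl[\tfrac{\gamma^{T+1}c^{\min}}{1-\gamma},\tfrac{\gamma^{T+1}c^{\max}}{1-\gamma}\bigr]$ pointwise, which is exactly the additive gap appearing in \eqref{eq_finite_DP_memory_g}.

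Concretely, I would proceed by a finite downward induction on $t$ from $T$ to $0$. For the base case $t=T$: write $c^\infty_T = c_T+\gamma\cdot c^\infty_{T+1}$ with $c^\infty_{T+1}\in[0,a^{\max}]$, so $\gamma^T c^\infty_T = \gamma^T c_T + \gamma^{T+1}c^\infty_{T+1}$, and $\gamma^{T+1}c^\infty_{T+1}\in\bigl[\tfrac{\gamma^{T+1}c^{\min}}{1-\gamma},\tfrac{\gamma^{T+1}c^{\max}}{1-\gamma}\bigr]$; taking the supremum over $[[A_T,C^\infty_T|m_T]]^{\boldsymbol g}$ of the left side and comparing with the supremum defining $J_T^{\boldsymbol g}(m_T;T)$ over $[[A_T,C_T|m_T]]^{\boldsymbol g}$ gives the $t=T$ bounds, using that the joint range of $(A_T,C_T)$ is the projection of the joint range of $(A_T,C_T,C^\infty_{T+1})$ and that an additive constant slack passes through a supremum. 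For the inductive step, suppose \eqref{eq_finite_DP_memory_g} holds at $t+1$. Since $V_t^{\boldsymbol g}(m_t)=\sup_{m_{t+1}\in[[M_{t+1}|m_t]]^{\boldsymbol g}}V_{t+1}^{\boldsymbol g}(m_{t+1})$ — which I would justify from the definition \eqref{value_function_t_g} by noting that $(A_t,C^\infty_t)$ given $m_t$ decomposes through $m_{t+1}$ and that $a_t+\gamma^t c^\infty_t$ is a function of the time-$(t+1)$ quantities, so the nested supremum over $m_{t+1}$ and then over $(a_{t+1},c^\infty_{t+1})$ collapses appropriately — I can apply the inductive bounds inside the supremum over $m_{t+1}$, then use that adding a constant commutes with $\sup$ and that $\sup$ is monotone, to obtain \eqref{eq_finite_DP_memory_g} at $t$. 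The same recursion with $\inf_{u_t}\sup_{m_{t+1}}$ yields (b); here one additionally uses that shifting both sides of an inequality by the same constant is preserved under $\inf$ followed by $\sup$.

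The main obstacle, and the step I would spend the most care on, is the claim that $V_t^{\boldsymbol g}(m_t)=\sup_{m_{t+1}\in[[M_{t+1}|m_t]]^{\boldsymbol g}}V_{t+1}^{\boldsymbol g}(m_{t+1})$ — i.e. that the infinite-horizon value function itself satisfies the one-step recursion that makes the induction go through. This requires carefully unpacking the definition of the conditional range $[[A_t,C^\infty_t\,|\,m_t]]^{\boldsymbol g}$, showing that it is consistent with first conditioning on $m_{t+1}$ (which is determined by $m_t$ together with $W_t$ and the strategy) and then taking $(A_{t+1},C^\infty_{t+1})$, and verifying that the decomposition $a_t+\gamma^t c^\infty_t = a_{t+1}+\gamma^t\cdot(\text{stuff})$ — using $A_{t+1}=A_t+\gamma^t C_t$ and $C^\infty_t=C_t+\gamma C^\infty_{t+1}$, which give $a_t+\gamma^t c^\infty_t = a_{t+1}+\gamma^{t+1}c^\infty_{t+1}$ — lets the nested supremum factor as claimed. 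An analogous factorization lemma is surely what underlies the finite-horizon recursion $J_t^{\boldsymbol g}(m_t;T)=\sup_{m_{t+1}}J_{t+1}^{\boldsymbol g}(m_{t+1};T)$ (indeed it is built into the definition), so the real content is establishing the same identity for $V_t^{\boldsymbol g}$; everything after that is bookkeeping with constants and monotonicity of suprema and infima.
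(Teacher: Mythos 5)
Your treatment of part (a) is essentially the paper's own proof: the same backward induction from $t=T$ down to $0$, the same base case via $\gamma^T c^\infty_T = \gamma^T c_T + \gamma^{T+1}c^\infty_{T+1}$ with the tail $\gamma^{T+1}c^\infty_{T+1}$ squeezed into $\big[\tfrac{\gamma^{T+1}c^{\min}}{1-\gamma},\tfrac{\gamma^{T+1}c^{\max}}{1-\gamma}\big]$, and the same inductive step resting on the identity $V_t^{\boldsymbol g}(m_t)=\sup_{m_{t+1}\in[[M_{t+1}|m_t]]^{\boldsymbol g}}V_{t+1}^{\boldsymbol g}(m_{t+1})$, which you correctly isolate as the delicate point and justify via $a_t+\gamma^t c^\infty_t=a_{t+1}+\gamma^{t+1}c^\infty_{t+1}$ and the factorization of the conditional range through $m_{t+1}$. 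That part is sound and matches the paper.

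The gap is in part (b). You propose to run ``the same recursion with $\inf_{u_t}\sup_{m_{t+1}}$'' directly on $V_t$ and $J_t$, but that presupposes the one-step Bellman identity $V_t(m_t)=\inf_{u_t}\sup_{m_{t+1}\in[[M_{t+1}|m_t,u_t]]}V_{t+1}(m_{t+1})$ for the \emph{optimal} value function. Unlike the strategy-dependent recursion you establish in (a), this is not bookkeeping: $V_t$ is defined as $\inf_{\boldsymbol g\in\mathcal G}V_t^{\boldsymbol g}(m_t)$, an infimum over entire infinite-horizon strategies, and equating that with a one-step $\inf$--$\sup$ recursion is itself a dynamic-programming verification theorem --- the ``$\leq$'' direction requires stitching together near-optimal continuation strategies across the different realizations $m_{t+1}$, with an $\varepsilon$-argument since the infima need not be attained. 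Nothing available at this point in the paper supplies that, and the lemma is in fact a stepping stone toward such statements. The paper's route avoids the issue: the lower bound in \eqref{eq_finite_DP_memory_opt} follows by taking $\inf_{\boldsymbol g}$ of both sides of the lower bound in \eqref{eq_finite_DP_memory_g}, and the upper bound uses the finite-horizon identity $J_t(m_t;T)=\inf_{\boldsymbol g\in\mathcal G}J_t^{\boldsymbol g}(m_t;T)$ (a standard fact for terminal-cost problems) together with part (a) and the same induction. You should either derive (b) from (a) in this way, or explicitly prove the Bellman recursion for $V_t$ before invoking it.
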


\begin{proof}
\textit{a)} We prove each inequality in \eqref{eq_finite_DP_memory_g} using backward induction. For the upper bound at time $T$, we use the dynamics of the accrued cost and cost-to-go to write that $V^{\boldsymbol{g}}_T(m_T) = \sup_{a_T, c_T, c_{T+1}^{\infty} \in [[A_T, C_T, C_{T+1}^{\infty}|m_T]]^{\boldsymbol{g}}} (a_T + \gamma^T {\cdot} c_T + \gamma^{T+1} {\cdot} c_{T+1}^\infty)$ $ \leq \sup_{a_T, c_T \in [[A_T, C_T|m_T]]^{\boldsymbol{g}}} \left(a_T + \gamma^T {\cdot} c_T\right) + \gamma^{T+1} {\cdot} a^{\max} \leq J_{t}^{\boldsymbol{g}}(m_t;T) + \frac{\gamma^{T+1}{\cdot} c^{\max}}{1-\gamma}$. The lower bound at time $T$ follows from $\frac{c^{\min}}{1-\gamma} \leq c_{T+1}^{\infty}$ using the same sequence of arguments as before. This forms the basis of our induction.
Next, consider the hypothesis that \eqref{eq_finite_DP_memory_g} holds at time $t+1$. For the upper bound at time $t$, by definition $V_t^{\boldsymbol{g}}(m_t) 
= \sup_{a_t, c_t, c^{\infty}_{t+1} \in [[A_t, C_t, C_{t+1}^{\infty}|m_t]]^{\boldsymbol{g}}} \left(a_t + \gamma^t {\cdot} c_t + \gamma^{t+1} {\cdot} c^{\infty}_{t+1} \right) 
= \sup_{a_{t+1}, c^{\infty}_{t+1} \in [[A_{t+1}, C^{\infty}_{t+1}|m_t]]^{\boldsymbol{g}}}(a_{t+1} + \gamma^{t+1} {\cdot} c^{\infty}_{t+1}) 
= \sup_{m_{t+1} \in [[M_{t+1}|m_t]]^{\boldsymbol{g}}} \sup_{a_{t+1}, c^{\infty}_{t+1} \in [[A_{t+1}, C^{\infty}_{t+1}|m_{t+1}]]^{\boldsymbol{g}}}(a_{t+1} $ $+ \gamma^{t+1} {\cdot} c^{\infty}_{t+1}) 
= \sup_{m_{t+1} \in [[M_{t+1}|m_t]]^{\boldsymbol{g}}}V^{\boldsymbol{g}}_{t+1}(m_{t+1}) 
\leq \sup_{m_{t+1} \in [[M_{t+1}|m_t]]^{\boldsymbol{g}}} J^{\boldsymbol{g}}_{t+1}(m_{t+1}; T) + \frac{\gamma^{T+1}{\cdot} c^{\max}}{1-\gamma}
= J^{\boldsymbol{g}}_{t}(m_{t}; T) + \frac{\gamma^{T+1}{\cdot} c^{\max}}{1-\gamma}$, where, in the fourth equality, we use \eqref{value_function_t_g} for $V_{t+1}^{\boldsymbol{g}}(m_{t+1})$; and in the inequality, we use the hypothesis. The lower bound follows from the same sequence of arguments. Thus, \eqref{eq_finite_DP_memory_g} holds using induction.

\textit{b)} We can prove the lower bound in \eqref{eq_finite_DP_memory_opt} by taking the infimum on both sides of the lower bound in \eqref{eq_finite_DP_memory_g}. To prove the upper bound in \eqref{eq_finite_DP_memory_opt}, we first note that $J_t(m_t; T) = \inf_{\boldsymbol{g} \in \mathcal{G}} J_t^{\boldsymbol{g}}(m_t; T)$ for all $t=0,\dots,T$ using standard DP arguments for terminal-cost problems \cite{bernhard2003minimax}. Then, at time $T$, by definition $V_T(m_T) = \inf_{\boldsymbol{g} \in \mathcal{G}}V^{\boldsymbol{g}}_T(m_T) \leq \inf_{\boldsymbol{g} \in \mathcal{G}} J^{\boldsymbol{g}}_T(m_T; T) + \frac{\gamma^{T+1}{\cdot} c^{\max}}{1-\gamma} = J_T(m_T; T) + \frac{\gamma^{T+1}{\cdot} c^{\max}}{1-\gamma}$. Using this as the basis, the result follows for all $t=0,\dots,T$ using the same induction arguments as in \eqref{eq_finite_DP_memory_g}. 
\end{proof}

Lemma \ref{finite_DP_memory} establishes that the approximation error between finite-horizon functions and corresponding value functions decreases as the horizon $T \in \mathbb{N}$ increases. A direct consequence of \eqref{eq_finite_DP_memory_opt} is that $\lim_{T \to \infty} J_0(y_0; T) = V_0(y_0)$ for all $y_0 \in \mathcal{Y}$. Note, however, that the domain of $J_{T}(m_T; T)$ is $\mathcal{M}_T = \mathcal{Y}^T \times \mathcal{U}^{T-1}$ which grows with $T$, and in the limit $T \to \infty$, the set $\mathcal{M}_T$ is infinite-dimensional. Thus, it is computationally intractable to achieve close approximations of the optimal value using \eqref{eq_DP_memory_opt}. We address this issue in the next subsection using \textit{information states}, which take values in time-invariant spaces.



\subsection{Information States} \label{subsection:basic_info_states}

In this subsection, we present the notion of information states which take values in time-invariant spaces. Then, we use them to construct a time-invariant DP decomposition which converges to the optimal value of Problem \ref{problem_1}.
To begin, recall from Section \ref{subsection:Notation} that a \textit{cost distribution} is the non-stochastic equivalent of a probability distribution for uncertain variables. We use two specific cost distributions in our exposition, defined as follows.

\begin{definition} \label{def_ind}
Let $\mathsf{X} \in \mathscr{X}$ and $\mathsf{Y} \in \mathscr{Y}$ be two uncertain variables. The \textit{indicator function} for $\mathbb{I}: \mathscr{X} \to \{\infty, 0\}$ for $\mathsf{x} \in \mathscr{X}$ is given by
\begin{gather}
    \mathbb{I}(\mathsf{x}) :=
    \begin{aligned}
    \begin{cases}
        0, &\text{ if } \mathsf{x} \in [[\mathsf{X}]], \\
        - \infty, &\text{ if } \mathsf{x} \not\in [[\mathsf{X}]],
    \end{cases}
    \end{aligned}
\end{gather}
and the conditional indicator function $\mathbb{I}: \mathcal{X} \times \mathcal{Y} \to  \{\infty, 0\}$ for any $\mathsf{x} \in \mathscr{X}$ given a realization $\mathsf{y} \in [[\mathsf{Y}]]$ is given by
\begin{gather} \label{indicator_def}
    \mathbb{I}(\mathsf{x}\,| \,\mathsf{y}) :=
    \begin{aligned}
    \begin{cases}
        0, &\text{ if } \mathsf{x} \in [[\mathsf{X}\,| \,\mathsf{y}]], \\
        - \infty, &\text{ if } \mathsf{x} \not\in [[\mathsf{X}\,| \,\mathsf{y}]].
    \end{cases}
    \end{aligned}
\end{gather}
\end{definition}

The indicator function verifies whether the input takes values within the conditional range of an uncertain variable and it satisfies the properties of a cost distribution from Subsection \ref{subsection:Notation}. Next, we use it to define the accrued distribution.

\begin{definition} \label{def_accrued_cost}
Let $\mathsf{X} \in \mathscr{X}$ and $\mathsf{Y} \in \mathscr{Y}$ be two uncertain variables and let $A_t \in \mathcal{A}$ be the accrued cost at any $t\in \mathbb{N}$. An \textit{accrued distribution} for any $\mathsf{x} \in \mathscr{X}$ at any $t \in \mathbb{N}$ is a function $r_t: \mathscr{X} \to \{-\infty\} \cup [-a^{\max}, 0]$, given by
\begin{gather}
    r_t(\mathsf{x}) := \sup_{a_t \in \mathcal{A}}\big(a_t + \mathbb{I}(\mathsf{x}, a_t)\big) - \sup_{a_t \in \mathcal{A}} \big(a_t + \mathbb{I}(a_t) \big), 
\end{gather}
and the conditional accrued distribution for $\mathsf{x} \in \mathscr{X}$ given a realization $\mathsf{y} \in [[\mathsf{Y}]]$ is a function $r_t: \mathscr{X} \times \mathscr{Y} \to \{-\infty\} \cup [-a^{\max}, 0]$, given by
\begin{align}
    \hspace{-5pt} r_t(\mathsf{x} \,| \, \mathsf{y}) \hspace{-2pt} := \hspace{-2pt} \sup_{a_t \in \mathcal{A}} \hspace{-2pt} \big(a_t + \mathbb{I}(\mathsf{x}, a_t\,|\,\mathsf{y}) \big) \hspace{-2pt}
    - \hspace{-2pt} \sup_{a_t \in \mathcal{A}} \hspace{-2pt} \big(a_t + \mathbb{I}(a_t\,|\,\mathsf{y}) \big). \hspace{-4pt} \label{def_r}
\end{align}
\end{definition}

The accrued distribution returns $-\infty$ when the input is not within the range of an uncertain variable and it returns an output from $[-a^{\max}, 0]$ otherwise. It also satisfies all properties of a cost distribution as defined in Subsection \ref{subsection:Notation}. Note that, at each ${{t \in \mathbb{N}}}$, given the realizations $m_t \in \mathcal{M}_t$, $u_t \in \mathcal{U}$ and the dynamics, we can compute the indicator $\mathbb{I}(c_t, m_{t+1}\,|\,m_t, u_t)$, and the accrued distribution $r_t(c_t, m_{t+1}\,|\,m_t, u_t)$ for all $c_t \in \mathcal{C}$ and $m_{t+1} \in \mathcal{M}_{t+1}$. We use these accrued distributions to define information states.

\begin{definition} \label{def_information_state}
An \textit{information state} at any $t \in \mathbb{N}$ is an uncertain variable $S_t= \sigma_t(M_t)$ taking values in a bounded, time-invariant subset $\mathcal{S}$ of a metric space $(\mathscr{S}, \eta)$. 
Furthermore, there exists a time-invariant cost distribution $\rho: \mathcal{C} \times \mathcal{S} \times \mathcal{S} \times \mathcal{U} \to \{-\infty\} \cup [-a^{\max}, 0]$ such that, for all $t \in \mathbb{N}$, for all $m_t \in \mathcal{M}_t$, $u_t \in \mathcal{U}$, $c_t \in \mathcal{C}$ and ${{{{s}}}}_{t+1} \in \mathcal{S}$, it satisfies
    \begin{align}
        r_t(c_t, {{{{s}}}}_{t+1}\,|\,m_t, u_t) &= \rho(c_t, {{{{s}}}}_{t+1}\,|\,\sigma_t(m_t), u_t).
    \end{align}
\end{definition}

Next, we use the information state to construct a time-invariant operator $\mathcal{T}$ that yields a fixed-point equation to recursively compute the optimal value in Problem \ref{problem_1}. 
We first define the deterministic \textit{cumulative discount} at any ${{t \in \mathbb{N}}}$ as $z_t := \gamma^t \in (0,1]$, where $z_0 = 1$ and $z_{t+1} = \gamma {\cdot} z_t$. 
Then, for any uniformly bounded function $\Lambda:\mathcal{S} \times (0,1] \to \mathbb{R}$ we define $\mathcal{T}:[\mathcal{S} \times (0,1] \to \mathbb{R}] \to [\mathcal{S} \times (0,1] \to \mathbb{R}]$, such that
    \begin{multline} \label{eq_general_value_operator}
        \hspace{-10pt} [\mathcal{T} \Lambda]({{{{s}}}}, z) \\
        \hspace{-10pt} := \underset{u \in \mathcal{U}}{\inf} \; \underset{c \in \mathcal{C}, \, s' \in \mathcal{S}}{\sup} \big(c + \gamma {\cdot} \Lambda(s', \gamma {\cdot} z)
            + {\rho(c, s' \,|\, s, u)}{\cdot}{z^{-1}}\big), \hspace{-5pt}
    \end{multline}
for all $s \in \mathcal{S}$ and $z \in (0,1]$. Note that we use time-invariant notation for all variables in \eqref{eq_general_value_operator} because the sets and functions in the RHS are time-invariant.

\begin{remark}
    The RHS of \eqref{eq_general_value_operator} is finite in the limit $z \to 0$ because $c$ has a finite upper bound, the function $\Lambda$ is uniformly bounded, and $\sup_{c \in \mathcal{C}, s' \in \mathcal{S}} \rho(c, s'\,|\,s,u)= 0$ for all $s \in \mathcal{S}$ and $u \in \mathcal{U}$ from the definition of cost distributions.
\end{remark}

Due to discounting, $\mathcal{T}$ is a contraction mapping (see proof in Appendix A) and therefore, using the Banach fixed point theorem, the equation $\Lambda = \mathcal{T} \Lambda$ admits a unique solution $\Lambda^{{{\infty}}} = \mathcal{T}\Lambda^{\infty}$. Starting at $\Lambda^{{{0}}}(s, z) := 0$, the fixed-point iteration around $\mathcal{T}$ generates a sequence of functions
    \begin{align} \label{n_iterated_function}
        \Lambda^{{{n+1}}}(s, z) = [\mathcal{T} \Lambda^{{{n}}}](s, z)
        = [\mathcal{T}^n \Lambda^{{{0}}}](s,z),
    \end{align}
for all $n=1,2,\dots$, such that $\lim_{n \to \infty} \mathcal{T}^n V^{{{0}}} = \Lambda^{{{\infty}}}$.
The fixed-point iteration in \eqref{n_iterated_function} forms a time-invariant DP.
Next, we establish that $\Lambda^n(\sigma_t(m_t), z_t)$, for any ${{n \in \mathbb{N}}}$, can be used to estimate the value function $V_t(m_t)$ of Problem \ref{problem_1} at any ${{t \in \mathbb{N}}}$, with estimation error that decreases in $n$.



\begin{theorem} \label{thm_g_opt}
    Consider the function $\Lambda^n$, for any ${{n \in \mathbb{N}}}$, generated using \eqref{n_iterated_function}. Then, for all ${{t \in \mathbb{N}}}$, it holds that
    \begin{multline} \label{eq_gen_info_bounds}
        \hspace{-10pt} \frac{\gamma^{n+t}{\cdot} c^{\min}}{1-\gamma} + \gamma^t {\cdot} \Lambda^{{{n}}}(\sigma_t(m_t), \gamma^t) + \hspace{-2pt} \sup_{a_t \in [[A_t|m_t]]} \hspace{-4pt} a_t \; \leq \; V_t(m_t) \\ 
        \hspace{-10pt}\leq \sup_{a_t \in [[A_t|m_t]]}a_t + \gamma^t {\cdot} \Lambda^{{{n}}}(\sigma_t(m_t), \gamma^t)  + \frac{\gamma^{n+t} {\cdot} c^{\max}}{1-\gamma}.
    \end{multline}
\end{theorem}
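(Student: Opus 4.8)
The plan is to reduce \eqref{eq_gen_info_bounds} to the finite-horizon error bound of Lemma~\ref{finite_DP_memory}(b) by first establishing the exact decomposition
\[
J_t(m_t;T) \;=\; \sup_{a_t \in [[A_t|m_t]]} a_t \;+\; \gamma^t \cdot \Lambda^{T-t+1}\big(\sigma_t(m_t),\, \gamma^t\big)
\]
for every finite horizon $T \in \mathbb{N}$ and each $t = 0,\dots,T$. Granting this, substituting $T = n+t-1$ for $n \geq 1$ gives $\gamma^{T+1} = \gamma^{n+t}$ and $\Lambda^{T-t+1} = \Lambda^n$, so that \eqref{eq_finite_DP_memory_opt} turns verbatim into \eqref{eq_gen_info_bounds}. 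The degenerate case $n=0$ is handled separately: since $\Lambda^0 \equiv 0$, \eqref{eq_gen_info_bounds} there reduces to elementary estimates on $V_t(m_t) = \sup_{a_t, c_t^\infty \in [[A_t, C_t^\infty|m_t]]}(a_t + \gamma^t c_t^\infty)$ using $C_t^\infty \in [\tfrac{c^{\min}}{1-\gamma}, \tfrac{c^{\max}}{1-\gamma}]$.

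I would prove the decomposition by backward induction on $t$, from $t=T$ to $t=0$. Two facts are used repeatedly: (i) $A_t$ is a function of $(W_{0:t-1}, U_{0:t-1})$ only, so $[[A_t|m_t,u_t]] = [[A_t|m_t]]$; and (ii) combining Definitions~\ref{def_accrued_cost}--\ref{def_information_state}, for all $c_t \in \mathcal{C}$ and $s_{t+1} \in \mathcal{S}$,
\[
\sup_{a_t \in [[A_t|c_t, s_{t+1}, m_t, u_t]]} a_t \;=\; \rho(c_t, s_{t+1}\,|\,\sigma_t(m_t), u_t) \;+\; \sup_{a_t \in [[A_t|m_t]]} a_t,
\]
both sides being $-\infty$ when $(c_t, s_{t+1}) \notin [[C_t, S_{t+1}|m_t, u_t]]$. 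For the base case $t=T$, I unroll $J_T(\cdot;T)$ and $\Lambda^1 = \mathcal{T}\Lambda^0$ with $\Lambda^0 \equiv 0$, absorb the factor $\gamma^T$ into the cost term (cancelling the $z^{-1} = \gamma^{-T}$ inside $\mathcal{T}$), apply (ii) to bring $\sup_{a_T} a_T$ inside, recombine the nested suprema over $c_T$, $s_{T+1}$ and $a_T$ into a single supremum over $[[A_T, C_T, S_{T+1}|m_T, u_T]]$, and marginalize out $s_{T+1}$ (the objective is independent of it) to recover $\inf_{u_T}\sup_{a_T, c_T}(a_T + \gamma^T c_T) = J_T(m_T;T)$.

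For the inductive step I start from $J_t(m_t;T) = \inf_{u_t}\sup_{m_{t+1} \in [[M_{t+1}|m_t,u_t]]} J_{t+1}(m_{t+1};T)$, plug in the induction hypothesis, and use $A_{t+1} = A_t + \gamma^t C_t$ to turn $\sup_{a_{t+1} \in [[A_{t+1}|m_{t+1}]]}a_{t+1}$ into a supremum over $(a_t, c_t)$. The nested suprema over $m_{t+1}$ and over $(a_t,c_t)$ merge into one supremum over $[[A_t, C_t, M_{t+1}|m_t,u_t]]$, and since the remaining integrand depends on $m_{t+1}$ only through $\sigma_{t+1}(m_{t+1})$, this collapses to a supremum over $[[A_t, C_t, S_{t+1}|m_t,u_t]]$. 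Peeling off the inner supremum over $a_t$, invoking (ii) to extract the $m_{t+1}$-free constant $\sup_{a_t \in [[A_t|m_t]]}a_t$, and enlarging the remaining supremum over $(c_t, s_{t+1})$ to all of $\mathcal{C}\times\mathcal{S}$ (the new terms carry $-\infty$ and are harmless since the supremum over the feasible set is finite), the inner expression becomes $\sup_{c_t, s_{t+1}}\big(\gamma^t c_t + \gamma^{t+1}\Lambda^{T-t}(s_{t+1},\gamma^{t+1}) + \rho(c_t, s_{t+1}|\sigma_t(m_t), u_t)\big)$. Taking $\inf_{u_t}$ and matching with \eqref{eq_general_value_operator} at $(s,z) = (\sigma_t(m_t), \gamma^t)$ identifies this with $\gamma^t [\mathcal{T}\Lambda^{T-t}](\sigma_t(m_t),\gamma^t) = \gamma^t \Lambda^{T-t+1}(\sigma_t(m_t),\gamma^t)$, which closes the induction.

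The main obstacle is the bookkeeping in the inductive step: correctly merging the two layers of suprema into one over the right joint range, and then using the defining property of the information state (Definition~\ref{def_information_state}) --- that the integrand's $m_{t+1}$-dependence passes through $\sigma_{t+1}(m_{t+1})$ --- to collapse it to a supremum over $[[A_t, C_t, S_{t+1}|m_t,u_t]]$. One must also carefully track the discount factors $z_t = \gamma^t$ against the $z^{-1}$ normalization built into $\mathcal{T}$, and confirm that extending suprema past infeasible $(c_t,s_{t+1})$, where $\rho = -\infty$, changes nothing.
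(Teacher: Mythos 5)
Your proposal is correct and follows essentially the same route as the paper: establish the exact decomposition $J_t(m_t;T) = \sup_{a_t \in [[A_t|m_t]]} a_t + \gamma^t \Lambda^{T-t+1}(\sigma_t(m_t),\gamma^t)$ by backward induction using the accrued-distribution/information-state identity, then substitute into Lemma~\ref{finite_DP_memory}(b) with $T = t+n-1$. Your separate treatment of the degenerate case $n=0$ (where that choice of $T$ is invalid) is a small but genuine tightening over the paper's proof, which glosses over it.
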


\begin{proof}
We show \eqref{eq_gen_info_bounds} using \eqref{eq_finite_DP_memory_opt} from Lemma \ref{finite_DP_memory}. For this purpose, we first show that for any finite horizon $T \in \mathbb{N}$, the following property holds for each $t=0,\dots,T$:
    \begin{gather} \label{interim_gen_info_bounds_1}
        J_{t}(m_t; T) = \gamma^t {\cdot} \Lambda^{T-t+1}\big(\sigma_t(m_t), \gamma^t\big) + \hspace{-4pt} \sup_{a_t \in [[A_t|m_t]]}a_t,
    \end{gather}
where $\Lambda^{T-t+1}$ is the $(T \hspace{-1pt} - \hspace{-1pt} t \hspace{-1pt} + \hspace{-1pt}1)$-th iterated function in \eqref{n_iterated_function}. We prove \eqref{interim_gen_info_bounds_1} by induction. At time $T$, recall from \eqref{eq_DP_memory_opt} that $J_{T}(m_T; T) = \inf_{u_T \in \mathcal{U}} \sup_{a_T, c_T \in [[A_T, C_T|m_T, u_T]]}(a_T $ $+ \gamma^T {\cdot} c_T)$. Using the indicator function in the RHS, $\sup_{a_T, c_T \in [[A_T, C_T|m_T, u_T]]}(a_T + \gamma^T {\cdot} c_T)
= \sup_{a_T \in \mathcal{A}, c_T \in \mathcal{C}}$ 
$ (a_T + \gamma^T {\cdot} c_T + \mathbb{I}(a_T, c_T\,|\, m_T, u_T))
= \sup_{a_T \in \mathcal{A}, c_T \in \mathcal{C}} \big(a_T + \gamma^T  {\cdot} c_T + \mathbb{I}(a_T, c_T\,|\, m_T, u_T)
- \sup_{a_T \in \mathcal{A}}(a_T + \mathbb{I}(a_T \,|\, m_T,$ $ u_T) ) \big) + \sup_{a_T \in [[A_T|m_T]]}$ 
$= \gamma^T {\cdot} \sup_{c_T \in \mathcal{C}} \big( c_T + \gamma^{-T} {\cdot} r_T(c_T$ 
$|\,m_T,u_T) \big) + \sup_{a_T \in [[A_T|m_T]]}a_T,$
where, in the second equality, we add and subtract the term $\sup_{a_T \in \mathcal{A}}(a_T + \mathbb{I}(a_T \,|\, m_T, u_T) )$; and, in the last equality, we use the definition of the accrued cost. Consequently, we can write that $J_T(m_T; T) 
= \inf_{u_T \in \mathcal{U}} \sup_{c_T \in \mathcal{C}, s_{T+1} \in \mathcal{S}}$ $ \gamma^T {\cdot} (c_T + \gamma {\cdot} \Lambda^{{{0}}}(s_{T+1}, \gamma^{T+1}) + \gamma^{-T} {\cdot} r_T(c_T, s_{T+1}\,|\,m_T, u_T) ) + \sup_{a_T \in [[A_T|m_T]]} a_T 
= \gamma^T {\cdot} \Lambda^1(\sigma_T(m_T), \gamma^T) + \sup_{a_T \in [[A_T|m_T]]}a_T$,
where, in the first equality, recall that $\Lambda^0(s_{T+1}, \gamma^{T+1}) :=0$ identically; 
and in the second equality, we use $r_T(c_T, s_{T+1}|m_T, u_T) = \rho(c_T, s_{T+1}|m_T, u_T)$ and the definition of $\Lambda^1(\sigma_T(m_T), \gamma^T)$ in \eqref{n_iterated_function}. This forms the basis of our induction. Next, consider the hypothesis that \eqref{interim_gen_info_bounds_1} holds for time $t+1$. Using the definition of the finite-horizon function at time $t$ and the induction hypothesis, $J_{t}(m_t;T) 
= \inf_{u_t \in \mathcal{U}} \sup_{m_{t+1} \in [[M_{t+1}|m_t, u_t]]} J_{t+1}(m_{t+1};T) 
= \inf_{u_t \in \mathcal{U}} \sup_{m_{t+1} \in [[M_{t+1}|m_t, u_t]]} \sup_{a_{t+1} \in [[A_{t+1}|m_{t+1}]]}\big(\gamma^{t+1}$ ${\cdot} \Lambda^{T-t}(\sigma_{t+1}(m_{t+1}), \gamma^{t+1}) + a_{t+1}) 
= \inf_{u_t \in \mathcal{U}} \sup_{m_{t+1}, a_{t+1} \in [[M_{t+1}, A_{t+1}|m_t, u_t]]}(\gamma^{t+1} {\cdot} \Lambda^{T-t}(\sigma_{t+1}($ $m_{t+1}), \gamma^{t+1})+ \gamma^t {\cdot} c_t + a_t) 
= \gamma^t {\cdot} V^{T-t+1}(\sigma_t(m_t), \gamma^t) +  \sup_{a_t \in [[A_t|m_t]]} a_t$, where, in the third equality, we use $a_{t+1} $ $ =a_t + \gamma^t {\cdot} c_t$ and rearrange the terms, and the fourth equality follows from the same sequence of arguments as time step $T$. 
This proves \eqref{interim_gen_info_bounds_1} by induction.
Then, \eqref{eq_gen_info_bounds} follows directly for all ${{t \in \mathbb{N}}}$ and all ${{n \in \mathbb{N}}}$ by substituting \eqref{interim_gen_info_bounds_1} into \eqref{eq_finite_DP_memory_opt} and selecting the horizon $T = t + n -1$.
\end{proof}


Theorem \ref{thm_g_opt} allows us to characterize the error between the optimal value $V_0(y_0)$ and $\Lambda^n(\sigma_0(y_0), 1)$ for any $y_0 \in \mathcal{Y}$ by selecting $t=0$ in \eqref{eq_gen_info_bounds}, it follows that
$\frac{\gamma^{n} {\cdot} c^{\min}}{1-\gamma} + \Lambda^{{{n}}}(\sigma_0(y_0), 1) \leq V_0(y_0) \leq \Lambda^{{{n}}}(\sigma_0(y_0), 1)  + \frac{\gamma^{n} {\cdot} c^{\max}}{1-\gamma}$ (recall that $a_0 = 0$ and $z_0 = 1$). These bounds imply that as the number of iterations $n \to \infty$, the fixed point $\Lambda^{\infty}$ satisfies
\begin{gather} \label{eq_optimality_of_infor_state}
    \Lambda^{\infty}(\sigma_0(y_0), 1) = V_0(y_0).
\end{gather}
Thus, the DP in \eqref{n_iterated_function} recursively computes the optimal value of Problem \ref{problem_1}. Next, consider that the infimum is achieved in the RHS of \eqref{eq_general_value_operator} for each function $\Lambda^n$, ${{n \in \mathbb{N}}}$. Then, we define a time-invariant control strategy $\boldsymbol{\pi}^* := (\pi^*, \pi^*,\dots)$ where the control law at each $t \in \mathbb{N}$ is the minimizing argument for $\Lambda^{\infty}$, i.e., $\pi^*(s, z) := \arg \min_{u \in \mathcal{U}} \sup_{c \in \mathcal{C}, s' \in \mathcal{S}}(c + \gamma {\cdot} \Lambda^{\infty}(s', \gamma {\cdot}z) + \frac{\rho(c, s'\,|\,s, u)}{z})$ for all $s \in \mathcal{S}$ and $z \in (0,1]$. 
A corresponding memory-based strategy is $\boldsymbol{g}^* = (g_0^*, g_1^*, \dots)$ with $g_t^*(m_t) := \pi^*(\sigma_t(m_t), \gamma^t)$ for all $m_t \in \mathcal{M}_t$ and all $t$. Then, $\boldsymbol{g}^*$ achieves the optimal value in Problem \ref{problem_1}, i.e., $V^{\boldsymbol{g}^*}_0(y_0) = V_0(y_0)$ (see proof in Appendix B). Thus, the information-state gives an optimal solution to Problem \ref{problem_1}.

\subsection{Examples of Information States} \label{subsection:info_state_examples}

In this subsection, we consider a system with a known state-space model to present specific information states which satisfy Definition \ref{def_information_state}. Consider a system with a state $X_t \in \mathcal{X}$ which starts at $X_0$ and evolves as $X_{t+1} = f_t(X_t, U_t, W_t)$, the observation is  $Y_t = h_t(X_t, W_t)$ and the incurred cost is $C_t = d_t(X_t,U_t)$ at each $t$, where $N_t \in \mathcal{N}$ is an uncontrolled disturbance. The uncontrolled variables $\{X_0, N_t, W_t \,|\, {{t \in \mathbb{N}}}\}$ take realizations independently. Next, we give examples of information states for different cases:

\textit{1) Perfectly observed systems:} For all $t \in \mathbb{N}$, let $Y_t = X_t$, an information state is $S_t \hspace{-1pt} = \hspace{-1pt}  X_t \hspace{-1pt} \in \hspace{-1pt} \mathcal{X}$.

\textit{2) Perfectly observed systems with deep dynamics:} For all $t \in \mathbb{N}$, let $Y_t = X_t$ and 
$X_{t+1} = f(X_{t:t-k}, U_t, W_t)$, an information state is $S_t = (X_{t-k}, \dots, X_t) \in \mathcal{X}^{k+1}$. 

\textit{3) Partially observed systems:} Consider a generic partially observed system. An information state at each $t \in \mathbb{N}$ is the function-valued variable $S_t: \mathcal{X} \to \{-\infty\} \cup [-a^{\max}, 0]$. 
At each $t$, for a given $m_t \in \mathcal{M}_t$, its realization is a function $s_t(x_t) := r_t(x_t|m_t)$, where $r_t(\cdot)$ is an accrued distribution. This is a normalization of the results in \cite{james1994risk,  bernhard2003minimax}.

\textit{4) Systems with action dependent costs:} For all $t \in \mathbb{N}$, let the cost of a partially observed system be $d_t(U_t) \in \mathbb{R}_{\geq0}$. Then, an information state is the conditional range $S_t = [[X_t$ $|\;M_t]] \in \mathcal{B}(\mathcal{X})$, where $\mathcal{B}(\mathcal{X})$ is the set of all subsets of $\mathcal{X}$. 

\begin{remark}
    Definition \ref{def_information_state} helps us identify information states when system dynamics are known. However, such a representation often needs to be learned purely from observation and cost data, without knowledge of dynamics. Thus, in the next section, we specialize the notion of information states to systems with observable costs and define approximate information states that can be learned from output data.
\end{remark}

\section{Systems with Observable Costs} \label{section:perfectly_observed}

In this section, we analyze Problem \ref{problem_1} in the case where the agent observes the incurred cost at each instance of time. Thus, at each ${{t \in \mathbb{N}}}$, the agent receives a realization of $(Y_t, C_t)$ and the memory is $M_t = (Y_{0:t}, C_{0:t-1}, U_{0:t-1})$. We first prove that for such a system, the accrued distribution in Definition \ref{def_information_state} at each ${{t \in \mathbb{N}}}$ reduces to simply an indicator.

\begin{lemma} \label{lemma_specialization}
    Consider Problem \ref{problem_1} with observable costs. At each ${{t \in \mathbb{N}}}$, for any given realizations $c_t \in \mathcal{C}$, $m_{t+1} \in \mathcal{M}_{t+1}$, $m_t \in \mathcal{M}_t$, and $u_t \in \mathcal{U}$, it holds that
    \begin{gather} \label{eq_lemma_specialization}
        r_t(c_t, m_{t+1}\,|\,m_t, u_t) = \mathbb{I}(c_t, m_{t+1}\,|\,m_t, u_t).
    \end{gather}
\end{lemma}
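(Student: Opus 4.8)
The plan is to exploit the fact that, once costs are observable, the accrued cost $A_t$ ceases to be a genuinely uncertain quantity relative to the memory: it becomes a deterministic function of $M_t$. Indeed, when $M_t = (Y_{0:t}, C_{0:t-1}, U_{0:t-1})$, the defining relation $A_t = \sum_{\ell=0}^{t-1}\gamma^\ell {\cdot} C_\ell$ involves only the component $C_{0:t-1}$ of $M_t$, so for any realization $m_t = (y_{0:t}, c_{0:t-1}, u_{0:t-1})$ with $q(m_t) > -\infty$ we have $[[A_t \,|\, m_t]] = \{a_t^*\}$, a singleton, where $a_t^* := \sum_{\ell=0}^{t-1}\gamma^\ell {\cdot} c_\ell$. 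Since $A_t$ also does not depend on $U_t$, the same conclusion holds after additionally conditioning on $u_t$, i.e., $[[A_t \,|\, m_t, u_t]] = \{a_t^*\}$ whenever $q(m_t, u_t) > -\infty$.

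First I would evaluate the two suprema appearing in the definition \eqref{def_r} of $r_t(c_t, m_{t+1}\,|\,m_t, u_t)$, applied with the uncertain variable $\mathsf{X} = (C_t, M_{t+1})$ and the conditioning realization $(m_t, u_t)$. For the second (normalizing) term, by \eqref{indicator_def} we have $\mathbb{I}(a_t\,|\,m_t, u_t) = 0$ precisely when $a_t = a_t^*$ and $-\infty$ otherwise, hence $\sup_{a_t \in \mathcal{A}}(a_t + \mathbb{I}(a_t\,|\,m_t, u_t)) = a_t^*$. For the first term, the key observation is that $(c_t, m_{t+1}, a_t) \in [[C_t, M_{t+1}, A_t\,|\,m_t, u_t]]$ if and only if $(c_t, m_{t+1}) \in [[C_t, M_{t+1}\,|\,m_t, u_t]]$ and $a_t = a_t^*$: the forward direction is immediate, and the reverse direction holds because $A_t$ is already pinned to $a_t^*$ by $m_t$ alone, so adjoining any feasible $(c_t, m_{t+1})$ cannot alter it. Consequently $\mathbb{I}(c_t, m_{t+1}, a_t\,|\,m_t, u_t)$ equals $0$ when $a_t = a_t^*$ and $(c_t, m_{t+1})$ is feasible, and $-\infty$ in all other cases, so that $\sup_{a_t \in \mathcal{A}}(a_t + \mathbb{I}(c_t, m_{t+1}, a_t\,|\,m_t, u_t)) = a_t^* + \mathbb{I}(c_t, m_{t+1}\,|\,m_t, u_t)$, with the convention $a_t^* + (-\infty) = -\infty$.

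Subtracting the second supremum from the first then yields $r_t(c_t, m_{t+1}\,|\,m_t, u_t) = \big(a_t^* + \mathbb{I}(c_t, m_{t+1}\,|\,m_t, u_t)\big) - a_t^* = \mathbb{I}(c_t, m_{t+1}\,|\,m_t, u_t)$, which is exactly \eqref{eq_lemma_specialization}. I expect the only delicate point to be the careful handling of the $-\infty$ cases — in particular justifying the decoupling of the joint feasibility condition into ``$a_t = a_t^*$'' and ``$(c_t, m_{t+1})$ feasible'', and noting that the identity is asserted for feasible $(m_t, u_t)$ so that $a_t^*$ is finite and the cancellation $a_t^* - a_t^* = 0$ is legitimate; the remainder is a direct substitution into Definitions \ref{def_ind} and \ref{def_accrued_cost}.
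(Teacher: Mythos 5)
Your argument is correct and follows essentially the same route as the paper's proof: both rest on the observation that, with observable costs, $A_t$ is determined by $m_t$ (so $[[A_t\,|\,m_t,u_t]]$ is the singleton $\{a_t^*\}$ with $a_t^*=\sum_{\ell=0}^{t-1}\gamma^\ell c_\ell$), evaluate both suprema in \eqref{def_r} at $a_t^*$, and cancel. Your explicit decoupling of the joint feasibility condition is just a more careful rendering of the paper's chain-rule step $\mathbb{I}(a_t,c_t,m_{t+1}\,|\,m_t,u_t)=\mathbb{I}(a_t\,|\,m_t,u_t,c_t,m_{t+1})+\mathbb{I}(c_t,m_{t+1}\,|\,m_t,u_t)$, so no substantive difference.
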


\begin{proof}
Let the given realization of the memory at time $t$ be $m_t = (\Tilde{y}_{0:t}, \Tilde{c}_{0:t-1}, \Tilde{u}_{0:t-1})$. Then, we expand the accrued distribution at any $t$ as $r_t(c_t, m_{t+1}|m_t, u_t) 
= \sup_{a_t \in \mathcal{A}} \big( a_t + \mathbb{I}(a_t, c_t, m_{t+1}|m_t, u_t) \big)$ $- \sup_{a_t \in \mathcal{A}} \big( a_t + \mathbb{I}(a_t|m_t)\big)$ 
$= \sup_{a_t \in \mathcal{A}} \big( a_t + \mathbb{I}(a_t|m_t, u_t, c_t,$ $ m_{t+1}) + \mathbb{I}(c_t, m_{t+1}|m_t, u_t) \big) - \sum_{\ell=0}^{t-1} \gamma^{\ell} {\cdot} \Tilde{c}_{\ell}$ 
$= \sum_{\ell=0}^{t-1} \gamma^{\ell} {\cdot} \Tilde{c}_{\ell} + \mathbb{I}(c_t, m_{t+1}|m_t, u_t) - \sum_{\ell=0}^{t-1} \gamma^{\ell} {\cdot} \Tilde{c}_{\ell}$ $= \mathbb{I}(c_t, m_{t+1}|m_t, u_t)$, where, in the second equality, the realization of $A_t$ is determined as $\tilde{a}_t = \sum_{\ell=0}^{t-1} \gamma^{\ell} {\cdot} \tilde{c}_{\ell}$ given $m_t$; and in the third equality, $\mathbb{I}(a_t|m_t, u_t, c_t, m_{t+1}) = 0$ only if $a_t = \tilde{a}_t$.
\end{proof}

Next, we present a simpler notion of information states.


\begin{definition} \label{def_info_specialized}
    An \textit{information state} for Problem \ref{problem_1} with observable costs at any ${{t \in \mathbb{N}}}$ is an uncertain variable $\bar{S}_t = \bar{\sigma}_t(M_t)$ taking values in a bounded, time-invariant set $\bar{\mathcal{S}}$. For all $t \in \mathbb{N}$, for all $m_t \in \mathcal{M}_t$ and $u_t \in \mathcal{U}_t$, it satisfies that
    \begin{gather} \label{eq_def_info_specialized}
        [[C_t, \bar{S}_{t+1} \, | \,m_t, u_t]] = [[C_t, \bar{S}_{t+1}\, | \,\bar{\sigma}_t(m_t), u_t]]. 
    \end{gather}
\end{definition}

Next, we use the information state from Definition \ref{def_info_specialized} to construct a time-invariant operator $\bar{\mathcal{T}} : [\bar{\mathcal{S}} \to \mathbb{R}] \to [\bar{\mathcal{S}} \to \mathbb{R}]$, such that, for any uniformly bounded function $\bar{\Lambda}: \bar{\mathcal{S}} \to \mathbb{R}$,
    \begin{gather} \label{eq_info_value_operator}
        [\bar{\mathcal{T}} \bar{\Lambda}](\bar{s}) := \inf_{u \in \mathcal{U}} \sup_{c, \bar{s}' \in [[C, \bar{S}'|\bar{s}, u]]} \big(c + \gamma {\cdot} \bar{\Lambda}(\bar{s}') \big).
    \end{gather}
Note that \eqref{eq_info_value_operator} is simpler than \eqref{eq_general_value_operator} and $\bar{\mathcal{T}}$ is also a contraction mapping. Thus, $\bar{\Lambda} = \bar{\mathcal{T}} \bar{\Lambda}$ admits a unique solution $\bar{\Lambda}^{\infty} = \bar{\mathcal{T}} \bar{\Lambda}^{\infty}$. Starting with $\bar{\Lambda}^0(\bar{s}) := 0$, the fixed-point iteration around $\bar{\mathcal{T}}$ generates a sequence of functions
    \begin{gather} \label{eq_specialized_iteration}
        \bar{\Lambda}^{n+1}(\bar{s}) = [\bar{\mathcal{T}} \bar{\Lambda}^n](\bar{s}) = [\bar{\mathcal{T}}^n \bar{\Lambda}^0](\bar{s}),
    \end{gather}
for all $n=1,2,\dots$, such that $\lim_{n \to \infty} \bar{\mathcal{T}}^n \bar{\Lambda}^0 = \bar{\Lambda}^{\infty}$. Next, we establish error bounds for using $\bar{\Lambda}^n(\bar{\sigma}_t(m_t))$, ${{n \in \mathbb{N}}}$, to estimate $V_t(m_t)$ for all $t$ in Problem \ref{problem_1} with observable costs.

\begin{theorem} \label{thm_specialized_bounds}
    Consider the function $\bar{\Lambda}^n$ generated using \eqref{eq_specialized_iteration} for any $n \in \mathbb{N}$. Then, for all ${{t \in \mathbb{N}}}$, it holds that
    \begin{multline} \label{eq_thm_specialized_bounds}
        \frac{\gamma^{n+t}{\cdot} c^{\min}}{1-\gamma} + \gamma^t {\cdot} \bar{\Lambda}^{{{n}}}(\bar{\sigma}_t(m_t)) + \sup_{a_t \in [[A_t|m_t]]}a_t  \; \leq \; V_t(m_t) \\ 
        \leq \sup_{a_t \in [[A_t|m_t]]}a_t + \gamma^t {\cdot} \bar{\Lambda}^{{{n}}}(\bar{\sigma}_t(m_t))  + \frac{\gamma^{n+t} {\cdot} c^{\max}}{1-\gamma}.
    \end{multline}
\end{theorem}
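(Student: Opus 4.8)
The plan is to mirror the proof of Theorem~\ref{thm_g_opt}, replacing the accrued-distribution machinery with Lemma~\ref{lemma_specialization} and the defining property of the specialized information state in Definition~\ref{def_info_specialized}. The core step is to establish the interim identity that, for any finite horizon $T \in \mathbb{N}$ and each $t=0,\dots,T$,
\[
  J_{t}(m_t; T) = \gamma^t {\cdot} \bar{\Lambda}^{T-t+1}\big(\bar{\sigma}_t(m_t)\big) + \sup_{a_t \in [[A_t|m_t]]} a_t,
\]
where $\bar{\Lambda}^{T-t+1}$ is the $(T-t+1)$-th iterate in \eqref{eq_specialized_iteration}. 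Substituting this into \eqref{eq_finite_DP_memory_opt} of Lemma~\ref{finite_DP_memory} and choosing the horizon $T = t + n - 1$ then yields \eqref{eq_thm_specialized_bounds} immediately, since in that case $T - t + 1 = n$ and $T + 1 = t + n$.

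The interim identity is proved by backward induction on $t$. In the observable-cost setting the memory $M_t = (Y_{0:t}, C_{0:t-1}, U_{0:t-1})$ pins down a single realization of the accrued cost, $\tilde{a}_t = \sum_{\ell=0}^{t-1}\gamma^\ell {\cdot} \tilde{c}_\ell$, so $\sup_{a_t \in [[A_t|m_t]]} a_t = \tilde{a}_t$, and by Lemma~\ref{lemma_specialization} the accrued distribution reduces to the indicator of the relevant joint range. For the base case $t=T$, the definition \eqref{eq_DP_memory_opt} together with the factorization $[[A_T, C_T | m_T, u_T]] = \{\tilde{a}_T\} \times [[C_T | m_T, u_T]]$ gives $J_T(m_T; T) = \tilde{a}_T + \gamma^T \inf_{u_T}\sup_{c_T \in [[C_T|m_T,u_T]]} c_T$; projecting \eqref{eq_def_info_specialized} onto the cost coordinate replaces $[[C_T|m_T,u_T]]$ by $[[C_T|\bar{\sigma}_T(m_T),u_T]]$, and the resulting expression is exactly $\gamma^T {\cdot} \bar{\Lambda}^1(\bar{\sigma}_T(m_T))$ since $\bar{\Lambda}^0 \equiv 0$.

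For the inductive step, assuming the identity at $t+1$, I would start from $J_t(m_t;T) = \inf_{u_t}\sup_{m_{t+1} \in [[M_{t+1}|m_t,u_t]]} J_{t+1}(m_{t+1};T)$, substitute the hypothesis, and use $A_{t+1} = A_t + \gamma^t {\cdot} C_t$ so that the objective depends on $m_{t+1}$ only through the pair $(C_t, \bar{S}_{t+1})$. The supremum over $m_{t+1} \in [[M_{t+1}|m_t,u_t]]$ then reduces to a supremum over $(c_t, \bar{s}_{t+1}) \in [[C_t, \bar{S}_{t+1}|m_t, u_t]]$, which by \eqref{eq_def_info_specialized} coincides with the supremum over $[[C_t, \bar{S}_{t+1}|\bar{\sigma}_t(m_t), u_t]]$; recognizing this as $[\bar{\mathcal{T}}\bar{\Lambda}^{T-t}](\bar{\sigma}_t(m_t)) = \bar{\Lambda}^{T-t+1}(\bar{\sigma}_t(m_t))$, and pulling out the common factor $\gamma^t$ and the fixed term $\tilde{a}_t = \sup_{a_t\in[[A_t|m_t]]}a_t$, closes the induction.

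The main obstacle is the bookkeeping in that last reduction: one must argue carefully that the map $m_{t+1} \mapsto \big(C_t(m_{t+1}), \bar{\sigma}_{t+1}(m_{t+1})\big)$ pushes the conditional range $[[M_{t+1}|m_t,u_t]]$ onto precisely $[[C_t, \bar{S}_{t+1}|m_t, u_t]]$ (no feasible pair lost, none spuriously added), so that the supremum is genuinely over the joint range, and then invoke \eqref{eq_def_info_specialized} to pass to the information-state-conditioned range. Everything else is the same rearrangement of discount factors as in the proof of Theorem~\ref{thm_g_opt}.
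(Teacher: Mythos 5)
Your proposal is correct and follows essentially the same route as the paper's own proof: the identical interim identity $J_{t}(m_t;T) = \gamma^t\cdot\bar{\Lambda}^{T-t+1}(\bar{\sigma}_t(m_t)) + \sup_{a_t\in[[A_t|m_t]]}a_t$ established by backward induction using Lemma \ref{lemma_specialization} and \eqref{eq_def_info_specialized}, followed by substitution into \eqref{eq_finite_DP_memory_opt} with $T = t+n-1$. The bookkeeping point you flag (that the supremum over $m_{t+1}\in[[M_{t+1}|m_t,u_t]]$ passes exactly to the joint range $[[C_t,\bar{S}_{t+1}|m_t,u_t]]$) is indeed the only delicate step, and the paper handles it the same way.
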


\begin{proof}
    We show \eqref{eq_thm_specialized_bounds} by combining arguments in Theorem \ref{thm_g_opt} with \eqref{eq_lemma_specialization} from Lemma \ref{lemma_specialization}. Thus, we first show that for any horizon $T \in \mathbb{N}$, the following holds for each $t=0,\dots,T$:
    \begin{gather} \label{second_interim}
        J_{t}(m_t; T) = \gamma^t {\cdot} \bar{\Lambda}^{T-t+1}\big(\bar{\sigma}_t(m_t)\big) + \sup_{a_t \in [[A_t|m_t]]}a_t.
    \end{gather}
    We can prove \eqref{second_interim} by induction. At time $T$, using the definition of the finite-horizon function $J_T(m_T; T) 
    = \inf_{u_T \in \mathcal{U}} \sup_{a_T, c_T \in [[A_T, C_T|m_T, u_T]]}(a_T + \gamma^T {\cdot} c_T) = \inf_{u_T \in \mathcal{U}} \sup_{c_T \in [[C_T|m_T, u_T]]}c_T + \sup_{a_T \in [[A_T|m_T]]} a_T 
    = \inf_{u_T \in \mathcal{U}} \sup_{c_T, \bar{\sigma}_{T+1}(m_{T+1}) \in [[C_T, S_{T+1}|m_T, u_T]]}(c_T + \gamma^T {\cdot} \bar{\Lambda}^0(\bar{\sigma}_{T+1}(m_{T+1}))) + \sup_{a_T \in [[A_T|m_T]]} a_T 
    = \bar{\Lambda}^1(\sigma_T(m_T)) + \sup_{a_T \in [[A_T|m_T]]} a_T$, 
    where, in the second equality, note that $A_T$ is completely determined given $M_T$ as in Lemma \ref{lemma_specialization}; and in the third equality,  we note that $\bar{\Lambda}^0(\bar{\sigma}_{T+1}(m_{T+1})) = 0$.
    This forms the basis of our induction. Next, consider as a hypothesis that \eqref{second_interim} holds at time $t+1$. Using the definition of the finite-horizon function at time $t$, $J_t(m_t; T) 
    = \inf_{u_t \in \mathcal{U}} \sup_{m_{t+1} \in [[M_{t+1}|m_t, u_t]]}\sup_{c_t, a_{t}\in [[C_t, A_{t}|m_{t+1}]]}(\gamma^{t+1} {\cdot}$ $ \bar{\Lambda}^{T-t}(\bar{\sigma}_{t+1}(m_{t+1})) + \gamma^t {\cdot} c_t + a_t)
    = \inf_{u_t \in \mathcal{U}}$ $ \sup_{c_t, a_t, m_{t+1} \in [[C_t, A_t, M_{t+1}|m_t, u_t]]}(\gamma^{t+1} {\cdot} \bar{\Lambda}^{T-t}(\bar{\sigma}_{t+1}(m_{t+1}))$ $ + \gamma^t {\cdot} c_t + a_t) 
    = \inf_{u_t \in \mathcal{U}}\sup_{c_t, \bar{\sigma}_{t+1}(m_{t+1}) \in [[C_t, \bar{S}_{t+1}|m_t, u_t]]}$ $(\gamma^{t+1} {\cdot} \bar{\Lambda}^{T-t}(\bar{\sigma}_{t+1}(m_{t+1})) +\gamma^t{\cdot}c_t ) + \sup_{a_t \in [[A_t|m_t]]}a_t$
    $= \inf_{u_t \in \mathcal{U}}\sup_{c_t, \bar{\sigma}_{t+1}(m_{t+1}) \in [[C_t, \bar{S}_{t+1}|\bar{\sigma}_t(m_t), u_t]]}(\gamma^{t+1} {\cdot} \bar{\Lambda}^{T-t}($ $\bar{\sigma}_{t+1}(m_{t+1})) + \gamma^t {\cdot} c_t ) + \sup_{a_t \in [[A_t|m_t]]}a_t 
    = \gamma^t \cdot \bar{\Lambda}^{T-t+1}(\bar{\sigma}_{t}(m_t)) + \sup_{a_t \in [[A_t|m_t]]}a_t$, where, in the third equality, we use the same arguments as in Lemma \ref{lemma_specialization}; in the fourth equality, we use \eqref{eq_def_info_specialized} from Definition \ref{def_info_specialized}; and in the last equality, we use the definition of $\bar{\Lambda}^{T-t+1}$ from \eqref{eq_specialized_iteration}. This proves \eqref{second_interim} using induction.
    Then, \eqref{eq_thm_specialized_bounds} follows directly for all ${{t \in \mathbb{N}}}$ and all ${{n \in \mathbb{N}}}$ by substituting \eqref{second_interim} into \eqref{eq_finite_DP_memory_opt} and selecting a horizon $T = t + n -1$.
\end{proof}

In \eqref{eq_thm_specialized_bounds}, we select $t=0$ and let $n \to \infty$ to establish that $\bar{\Lambda}^{\infty}(\bar{\sigma}_0(y_0)) = V_0(y_0)$. Thus, when Problem \ref{problem_1} has observable costs,
the fixed point $\bar{\Lambda}^{\infty}$ computes the optimal value function $V_0$ as a direct consequence of Theorem \ref{thm_specialized_bounds}. Next, consider that the infimum is achieved in the RHS of $[\bar{\mathcal{T}}\bar{\Lambda}^{n}](\bar{s})$ for all $\bar{s} \in \bar{\mathcal{S}}$ and ${{n \in \mathbb{N}}}$. We define a strategy $\boldsymbol{\pi}^* = (\pi^*, \pi^*, \dots)$, where $\bar{\pi}^*: \bar{\mathcal{S}} \to \mathcal{U}$ is the minimizing argument in RHS of \eqref{eq_info_value_operator} for $\Lambda = \bar{\Lambda}^{\infty}$. Then, from the same arguments as in Subsection \ref{subsection:basic_info_states}, it holds that the memory-based strategy $\bar{\boldsymbol{g}}^* = (\bar{g}_0^*, \bar{g}_1^*, \dots)$, where $\bar{g}_t^* := \bar{\pi}^*(\sigma_t(m_t))$, gives an optimal solution to Problem \ref{problem_1} with observable costs.

\begin{remark}
    If a partially observed system with observable costs has the state-space model from Subsection \ref{subsection:info_state_examples}, an information state at each $t$ is $\bar{S}_t = [[X_t|M_t]] \in \mathcal{B}(\mathcal{X})$. This is simpler than the accrued cost function in Subsection \ref{subsection:info_state_examples}.
\end{remark}

\begin{remark}
    When attempting to learn an information state that satisfies Definition \ref{def_info_specialized} using only output data, we may not be able to satisfy \eqref{eq_def_info_specialized} exactly. Thus, in Subsection \ref{subsection:approx}, we relax this definition for approximate information states.
\end{remark}

\subsection{Approximate Information States} \label{subsection:approx}

In this subsection, we define approximate information states that approximately satisfy \eqref{eq_def_info_specialized}, and construct a time-invariant approximate DP of Problem \ref{problem_1} using them. Then, we bound the resulting error estimating the optimal value and the performance loss of the resulting approximate strategy.

\begin{definition} \label{def_approximate_info}
    An \textit{approximate information state} for Problem \ref{problem_1} with observable costs at any ${{t \in \mathbb{N}}}$ is an uncertain variable $\hat{S}_t = \hat{\sigma}_t(M_t)$ taking values in a bounded, time-invariant set $\hat{\mathcal{S}}$. Furthermore, there exists a parameter $\epsilon \in \mathbb{R}_{\geq0}$ such that for all $m_t \in \mathcal{M}_t$ and $u_t \in \mathcal{U}$ and $t \in \mathbb{N}$, it satisfies
    \begin{gather} \label{eq_def_approximate_info}
        \mathcal{H}\big( [[C_t, \hat{S}_{t+1}|m_t, u_t]], [[C_t, \hat{S}_{t+1}|\hat{\sigma}_t(m_t), u_t]] \big) \leq \epsilon,
    \end{gather}
    where recall that $\mathcal{H}$ is the Hausdorff distance defined in \eqref{H_met_def}.
\end{definition}

To compute an approximate value and an approximate control strategy, we proceed with approximate information states just as we did with information states. First, we construct a time-invariant operator $\hat{\mathcal{T}}: [\hat{\mathcal{S}} \to \mathbb{R}] \to [\hat{\mathcal{S}} \to \mathbb{R}]$, such that for any uniformly bounded function $\hat{\Lambda}: \hat{\mathcal{S}} \to \mathbb{R}$,
\begin{multline} \label{eq_DP_approx}
    [\hat{\mathcal{T}} \hat{\Lambda}](\hat{s}) := \inf_{u \in \mathcal{U}} \sup_{c, \hat{s}' \in [[C, \hat{S}'|\hat{s}, u]]}\big(c + \gamma {\cdot} \hat{\Lambda}(\hat{s}')\big).
\end{multline}
Note that
$\hat{\mathcal{T}}$ is a contraction mapping and thus, the equation $\hat{\Lambda} = \hat{\mathcal{T}} \hat{\Lambda}$ admits a unique solution $\hat{\Lambda}^{\infty} = \hat{\mathcal{T}} \hat{\Lambda}^{\infty}$. Then, starting with $\hat{\Lambda}^0(\hat{s}):= 0$ the fixed-point iteration around $\hat{\mathcal{T}}$ recursively generates the functions $\hat{\Lambda}^{n+1}(\hat{s}) = [\hat{\mathcal{T}} \hat{\Lambda}^n](\hat{s}) = [\hat{\mathcal{T}}^n \hat{\Lambda}^0](\hat{s})$,
for all $n=1,2,\dots$, such that $\lim_{n \to \infty} \hat{\mathcal{T}}^n \hat{\Lambda}^0 = \hat{\Lambda}^{\infty}$. This forms our approximate DP decomposition. Next, consider that the infimum is achieved in the RHS of $[\hat{\mathcal{T}} \hat{\Lambda}^{n}](\hat{s})$ for all $\hat{s} \in \hat{\mathcal{S}}$ and all $n \in \mathbb{N}$. We an approximate strategy $\hat{\boldsymbol{\pi}}^* = (\hat{\pi}^*, \hat{\pi}^*, \dots)$, where $\hat{\pi}^*: \hat{S} \to \mathcal{U}$ is the minimizing argument in the RHS of \eqref{eq_DP_approx} for $\hat{\Lambda} = \hat{\Lambda}^{\infty}$. Then, a corresponding memory-based strategy is $\hat{\boldsymbol{g}}^* := (\hat{g}_0^*, \hat{g}_1^*, \dots)$ with $\hat{g}_t^* := \pi^*(\sigma_t(m_t))$ for all $t\in\mathbb{N}$. Next, we bound both the approximation error between the optimal value $V_0(y_0)$ and $\hat{\Lambda}^{\infty}(\hat{\sigma}_0(y_0))$, and the performance loss when implementing $\hat{\boldsymbol{g}}^*$ to generate the control actions.


\begin{theorem} \label{thm_approx}
    Let the functions $\hat{\Lambda}^n$ be Lipschitz continuous with a constant $L_{\hat{\Lambda}} \hspace{-1pt}\in\hspace{-1pt} \mathbb{R}_{\geq0}$ for all ${{n \hspace{-1pt}\in\hspace{-1pt} \mathbb{N}}}$. Then, we have that
    \begin{align} \label{eq_thm_approx}
        \textbf{a) } \; &|V_0(y_0) - \hat{\Lambda}^{\infty}(\hat{\sigma}_0(y_0))| \leq {\hat{L} {\cdot} \epsilon}{\cdot}{(1-\gamma)^{-1}}, \\
        \textbf{b) } \; &|V_0(y_0) - V_0^{\hat{\boldsymbol{g}}^*}(y_0)| \leq {2{\cdot}\hat{L}{\cdot}\epsilon}{\cdot}{(1-\gamma)^{-1}}, \label{eq_thm_approx_g}
    \end{align}
    where $\hat{L} = \max\{\gamma {\cdot} L_{\hat{\Lambda}},1\}$. 
\end{theorem}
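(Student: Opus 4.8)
The plan is to reduce both parts to the finite-horizon memory DP of Lemma~\ref{finite_DP_memory}, replaying the inductions behind Theorems~\ref{thm_g_opt} and~\ref{thm_specialized_bounds} while tracking the error that \eqref{eq_def_approximate_info} introduces at each stage. Two preliminary facts I would record first: (i) by the contraction property of $\hat{\mathcal{T}}$ the iterates $\hat\Lambda^n$ converge uniformly to $\hat\Lambda^\infty$, so $\hat\Lambda^\infty$ is also Lipschitz with constant $L_{\hat\Lambda}$; and (ii) consequently the map $(c,s')\mapsto c+\gamma\,\hat\Lambda^n(s')$, and likewise with $\hat\Lambda^\infty$, is Lipschitz on the product metric space with constant $\hat L=\max\{1,\gamma L_{\hat\Lambda}\}$, while $(c,s')\mapsto c$ is $1$-Lipschitz.

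For part~(a), I define $\hat J_t(m_t;T):=\gamma^t\hat\Lambda^{T-t+1}(\hat\sigma_t(m_t))+\sup_{a_t\in[[A_t|m_t]]}a_t$ and prove by backward induction that
\begin{gather*}
    \big|J_t(m_t;T)-\hat J_t(m_t;T)\big|\;\le\; \gamma^t\hat L\epsilon\sum_{\ell=0}^{T-t}\gamma^\ell\;\le\; \frac{\gamma^t\hat L\epsilon}{1-\gamma}.
\end{gather*}
At $t=T$, Lemma~\ref{lemma_specialization} makes $A_T$ a deterministic function of $m_T$, so $J_T(m_T;T)$ equals $\sup_{a_T}a_T$ plus $\gamma^T\inf_u\sup_{c\in[[C_T|m_T,u]]}c$, which differs from $\hat J_T(m_T;T)$ only by replacing $[[C_T,\hat S_{T+1}|m_T,u]]$ with $[[C_T,\hat S_{T+1}|\hat\sigma_T(m_T),u]]$ inside a supremum of the $1$-Lipschitz map $(c,s')\mapsto c$; by \eqref{H_met_property} and \eqref{eq_def_approximate_info} this is at most $\gamma^T\epsilon\le\gamma^T\hat L\epsilon$. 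For the step I write $J_{t+1}=\hat J_{t+1}+e_{t+1}$ with $|e_{t+1}|\le\gamma^{t+1}\hat L\epsilon\sum_{\ell=0}^{T-t-1}\gamma^\ell$, substitute into $J_t(m_t;T)=\inf_{u_t}\sup_{m_{t+1}}J_{t+1}(m_{t+1};T)$, use $\sup_{a_{t+1}\in[[A_{t+1}|m_{t+1}]]}a_{t+1}=\sup_{a_t\in[[A_t|m_t]]}a_t+\gamma^t c_t(m_{t+1})$ from Lemma~\ref{lemma_specialization}, recast the inner supremum over $m_{t+1}$ as a supremum over $(c,s')\in[[C_t,\hat S_{t+1}|m_t,u_t]]$ of $c+\gamma\hat\Lambda^{T-t}(s')$, and apply \eqref{H_met_property} with constant $\hat L$ together with \eqref{eq_def_approximate_info} to pass to $[[C_t,\hat S_{t+1}|\hat\sigma_t(m_t),u_t]]$; this recovers $\gamma^t\hat\Lambda^{T-t+1}(\hat\sigma_t(m_t))$ up to an extra $\gamma^t\hat L\epsilon$, and collecting terms closes the induction. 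Setting $t=0$ and using $A_0=0$ gives $|J_0(y_0;T)-\hat\Lambda^{T+1}(\hat\sigma_0(y_0))|\le\hat L\epsilon/(1-\gamma)$; combining with $|J_0(y_0;T)-V_0(y_0)|\le\gamma^{T+1}c^{\max}/(1-\gamma)$ from Lemma~\ref{finite_DP_memory}(b) and $\hat\Lambda^{T+1}(\hat\sigma_0(y_0))\to\hat\Lambda^\infty(\hat\sigma_0(y_0))$, then letting $T\to\infty$, yields \eqref{eq_thm_approx}.

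For part~(b), it suffices to show $V_0^{\hat{\boldsymbol g}^*}(y_0)\le\hat\Lambda^\infty(\hat\sigma_0(y_0))+\hat L\epsilon/(1-\gamma)$: combined with part~(a) this gives $V_0^{\hat{\boldsymbol g}^*}(y_0)\le V_0(y_0)+2\hat L\epsilon/(1-\gamma)$, while $V_0^{\hat{\boldsymbol g}^*}(y_0)\ge V_0(y_0)$ holds because $\hat{\boldsymbol g}^*\in\mathcal G$ and $V_0=\inf_{\boldsymbol g\in\mathcal G}V_0^{\boldsymbol g}$. Here I would work directly in infinite horizon via the identity $V_t^{\boldsymbol g}(m_t)=\sup_{m_{t+1}\in[[M_{t+1}|m_t]]^{\boldsymbol g}}V_{t+1}^{\boldsymbol g}(m_{t+1})$ established inside the proof of Lemma~\ref{finite_DP_memory}(a). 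Put $D_t(m_t):=V_t^{\hat{\boldsymbol g}^*}(m_t)-\sup_{a_t\in[[A_t|m_t]]}a_t-\gamma^t\hat\Lambda^\infty(\hat\sigma_t(m_t))$ and $\delta_t:=\sup_{m_t}D_t(m_t)$; boundedness of $V_t^{\hat{\boldsymbol g}^*}$, the accrued cost, and $\hat\Lambda^\infty$ gives $|\delta_t|\le\gamma^t(c^{\max}-c^{\min})/(1-\gamma)\to0$. Substituting $V_{t+1}^{\hat{\boldsymbol g}^*}(m_{t+1})\le\sup_{a_{t+1}}a_{t+1}+\gamma^{t+1}\hat\Lambda^\infty(\hat\sigma_{t+1}(m_{t+1}))+\delta_{t+1}$ into the recursion, using $\sup_{a_{t+1}}a_{t+1}=\sup_{a_t}a_t+\gamma^t c_t(m_{t+1})$ for observable costs, recasting the supremum over $m_{t+1}$ as one over $[[C_t,\hat S_{t+1}|m_t,u_t]]$ with $u_t=\hat\pi^*(\hat\sigma_t(m_t))$, and bounding via \eqref{H_met_property} and \eqref{eq_def_approximate_info} by $[\hat{\mathcal T}\hat\Lambda^\infty](\hat\sigma_t(m_t))+\hat L\epsilon=\hat\Lambda^\infty(\hat\sigma_t(m_t))+\hat L\epsilon$ (using that $\hat\pi^*$ attains the infimum in $\hat{\mathcal T}$ and that $\hat\Lambda^\infty$ is its fixed point), I obtain $\delta_t\le\gamma^t\hat L\epsilon+\delta_{t+1}$; telescoping with $\delta_t\to0$ gives $\delta_0\le\hat L\epsilon/(1-\gamma)$, hence $D_0(y_0)=V_0^{\hat{\boldsymbol g}^*}(y_0)-\hat\Lambda^\infty(\hat\sigma_0(y_0))\le\hat L\epsilon/(1-\gamma)$ since $A_0=0$, which together with the reduction above establishes \eqref{eq_thm_approx_g}.

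I expect the main obstacle to be the bookkeeping in these inductions: making precise the repeated rewriting ``supremum over $m_{t+1}\in[[M_{t+1}|m_t,u_t]]$ of a function of $\big(c_t(m_{t+1}),\hat\sigma_{t+1}(m_{t+1})\big)$ equals supremum over $[[C_t,\hat S_{t+1}|m_t,u_t]]$'', and propagating the accrued-cost term $\sup_{a_t}a_t+\gamma^t c_t$ through the suprema so that the geometric series of $\hat L\epsilon$-errors sums to exactly $\hat L\epsilon/(1-\gamma)$; the remaining steps are routine applications of \eqref{H_met_property} and the contraction and fixed-point structure of $\hat{\mathcal T}$.
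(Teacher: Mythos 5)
Your proof of part (a) is correct and is essentially the paper's own argument: the quantity you call $|J_t(m_t;T)-\hat J_t(m_t;T)|$ is exactly the paper's $\beta_t(T)$, accumulated by the same backward induction (base case at $T$ via Lemma \ref{lemma_specialization}, inductive step via one application of \eqref{H_met_property} and \eqref{eq_def_approximate_info} per stage), and the conclusion follows by the same combination with Lemma \ref{finite_DP_memory}(b) and the limit $T\to\infty$. For part (b) the paper only states that the result ``follows from a similar series of arguments,'' implicitly meaning a second finite-horizon induction on the strategy-dependent functions $J_t^{\hat{\boldsymbol g}^*}(m_t;T)$ in the style of Appendix B; you instead argue directly in infinite horizon, telescoping the defect $\delta_t$ of $V_t^{\hat{\boldsymbol g}^*}$ against $\gamma^t\hat\Lambda^\infty(\hat\sigma_t(m_t))$ using the recursion $V_t^{\boldsymbol g}(m_t)=\sup_{m_{t+1}}V_{t+1}^{\boldsymbol g}(m_{t+1})$ from Lemma \ref{finite_DP_memory}(a) and the fact that $\delta_t\to 0$ by discounting. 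This variant is sound and arguably cleaner: by working with the fixed point $\hat\Lambda^\infty$ throughout, it avoids the wrinkle that $\hat\pi^*$ minimizes the operator applied to $\hat\Lambda^\infty$ but not to each iterate $\hat\Lambda^n$, which the finite-horizon route would have to handle. Your two preliminary observations (the uniform limit $\hat\Lambda^\infty$ inherits the Lipschitz constant $L_{\hat\Lambda}$, and $(c,s')\mapsto c+\gamma\hat\Lambda(s')$ has Lipschitz constant $\hat L=\max\{1,\gamma L_{\hat\Lambda}\}$) are both needed and correctly stated.
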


\begin{proof}
We show \eqref{eq_thm_approx} using \eqref{eq_finite_DP_memory_opt} from Lemma \ref{finite_DP_memory}. Thus, we first show that for any $T \hspace{-1pt} \in \hspace{-1pt} \mathbb{N}$, it holds for all $t = 0,\dots,T$: \hspace{-10pt}
    \begin{gather} \label{eq_interim_third}
         \hspace{-5pt} |J_{t}(m_t; \hspace{-1pt} T) \hspace{-2pt} - \hspace{-1pt} \gamma^t {\cdot} \hat{\Lambda}^{T-t+1} \hspace{-1pt} \big(\hat{\sigma}_t(m_t)\big) \hspace{-2pt}  - \hspace{-7pt} \sup_{a_t \in [[A_t|m_t]]} \hspace{-7pt} a_t| \hspace{-1pt} \leq \hspace{-1pt} \beta_{t}(T), \hspace{-2pt}
    \end{gather}
where $\beta_{t}(T) = \beta_{t+1}(T) + \gamma^t {\cdot} \hat{L} {\cdot} \epsilon$ and $\beta_{T}(T) = \gamma^T {\cdot} \hat{L} {\cdot} \epsilon$. 
We prove \eqref{eq_interim_third} by induction. At time $T$, recall that $J_{T}(m_T; T) = \inf_{u_T \in \mathcal{U}}\sup_{c_T \in [[C_T|m_T, u_T]]} \gamma^T {\cdot}c_T + \sup_{a_T \in [[A_T|m_T]]}$ $ a_T$ using the arguments in Lemma \ref{lemma_specialization}. This implies that $|J_{T}(m_T; T)- \gamma^T {\cdot} \hat{\Lambda}^1\big(\hat{\sigma}_T(m_T)\big) - \sup_{a_T \in [[A_T|m_T]]} a_T| 
= |\inf_{u_T \in \mathcal{U}} \sup_{c_T \in [[C_T|m_T, u_T]]} \gamma^T {\cdot}c_T - \gamma^T {\cdot} \hat{\Lambda}^1\big(\hat{\sigma}_T(m_T)\big)| 
= \gamma^T {\cdot} |\inf_{u_T \in \mathcal{U}} \sup_{c_T, \hat{s}_{T+1} \in [[C_T, \hat{S}_{T+1}|m_T, u_T]]} (c_T + \gamma {\cdot} \hat{\Lambda}^0(\hat{s}_{T+1}))- \inf_{u_T \in \mathcal{U}} \sup_{c_T, \hat{s}_{T+1} \in [[C_T, \hat{S}_{T+1}|\hat{\sigma}_T(m_T), u_T]]}$ 
$(c_T + \gamma {\cdot} \hat{\Lambda}^0 \hspace{-1pt} (\hat{s}_{T+1} \hspace{-1pt}) \hspace{-1pt}) \hspace{-1pt}| \hspace{-1pt} \leq \hspace{-1pt} \gamma^T {\cdot} \hat{L} {\cdot} \sup_{u_T \in \mathcal{U}} \hspace{-2pt} \mathcal{H}([[C_T, \hat{S}_{T+1}|m_T, u_T]],$ $ [[C_T, \hat{S}_{T+1}|\hat{\sigma}_T(m_T), u_T]])
\leq \gamma^T {\cdot} \hat{L} {\cdot} \epsilon$, where, in the second equality, we note that $\Lambda^0(\hat{s}_{T+1}) = 0$ identically; in the first inequality, we note that $\hat{L} = \max\{\gamma {\cdot} L_{\hat{\Lambda}}, 1\}$ is the Lipschitz constant of $(c_T + \gamma {\cdot} \hat{\Lambda}^0(\hat{s}_{T+1}) )$ with respect to $(c_T, \hat{s}_{T+1})$ and use \eqref{H_met_property}; and in the second inequality, we use \eqref{eq_def_approximate_info}. This forms the basis of our induction. Next, we consider the hypothesis that \eqref{eq_thm_approx} holds at time $t+1$. Using the hypothesis and rearranging terms,
$J_{t+1}(m_{t+1};T) \leq \beta_{t+1}(T) + \gamma^{t+1} {\cdot} \hat{\Lambda}^{T-t}$ $ (\hat{\sigma}_{t+1}(m_{t+1})) + \sup_{a_{t+1} \in [[A_{t+1}|m_{t+1}]]} a_{t+1}$. Then, at time $t$, $|J_t(m_t; T) - \gamma^t {\cdot} \hat{\Lambda}^{T-t+1}\big(\hat{\sigma}_{t}(m_t)\big)- \sup_{a_t \in [[A_t|m_t]]}$ $ a_t|
\hspace{-1pt}
\leq \hspace{-1pt}\beta_{t+1}(T) \hspace{-1pt} + \hspace{-1pt} |\inf_{u_t \in \mathcal{U}} \sup_{m_{t+1} \in [[M_{t+1}|m_t, u_t]]}(\gamma^{t+1} {\cdot} \hat{\Lambda}^{T-t}$ $(\hat{\sigma}_{t+1}(m_{t+1}))+ \sup_{a_t, c_t \in [[A_t, C_t|m_{t+1}]]}$ $(a_t + \gamma^t {\cdot} c_t) ) - \gamma^t {\cdot} $ $\hat{\Lambda}^{T-t+1}(\hat{\sigma}_{t}(m_t)) - \sup_{a_t \in [[A_t|m_t]]}a_t |
\leq \beta_{t+1}(T) + \gamma^t {\cdot} \sup_{u_t \in \mathcal{U}}$ 
$|\sup_{c_t, \hat{\sigma}_{t+1}(m_t) \in [[C_t, \hat{S}_{t+1}|m_t, u_t]]} (c_t + \gamma {\cdot} \hat{\Lambda}^{T-t}(\hat{\sigma}_{t+1}($ $m_{t+1})))$ $- \sup_{c_t, \hat{s}_{t+1} \in [[C_t, \hat{S}_{t+1}|\hat{\sigma}_t(m_t), u_t]]}(c_t + \gamma {\cdot} \hat{\Lambda}^{T-t} (\hat{s}_{t+1}))| \leq \beta_{t+1}(T) + \gamma^t {\cdot} \hat{L} {\cdot} \epsilon$, where, in the second inequality, we use arguments in Lemma \ref{lemma_specialization} for $[[A_t, C_t|m_{t+1}]] = [[A_t|m_{t+1}]] \times [[C_t|m_{t+1}]]$; and, in the third inequality we use \eqref{H_met_property} and \eqref{eq_def_approximate_info}. This proves \eqref{eq_interim_third} for all $t$ using induction. 

Next, for the iterated function $\hat{\Lambda}^n$, we select a horizon $T = n-1$ and set $t=0$ in \eqref{eq_interim_third}, to write that
    $|J_0(y_0; T) - \hat{\Lambda}^n(\hat{\sigma}_0(y_0))| \leq \beta_0(T)$,
where $\beta_0(T) = \sum_{\ell=0}^{n-1} \gamma^{\ell} {\cdot} \hat{L} {\cdot} \epsilon$. As $n \to \infty$ with $T=n-1$, note that $\lim_{T\to \infty} J_0(y_0; T) = V_0(y_0)$, $\lim_{n \to \infty} \hat{\Lambda}^n(\hat{\sigma}_0(y_0)) = \hat{\Lambda}^\infty(\hat{\sigma}_0(y_0))$, and $\lim_{T \to \infty} \beta_0(T) = \frac{\hat{L} {\cdot} \epsilon}{1-\gamma}$. 
The proof for \eqref{eq_thm_approx_g} follows from a similar series of arguments.
\end{proof}

\subsection{Alternate Characterization} \label{subsection:alternate}

When exploring whether an uncertain variable is a valid candidate to be considered for an approximate information state, it may be difficult to verify \eqref{eq_def_approximate_info}. Thus, we present two \textit{stronger} conditions that are easier to verify. To establish that $\hat{S}_t = \hat{\sigma}_t(M_t)$, $t \in \mathbb{N}$, satisfies \eqref{eq_def_approximate_info}, the following two conditions should hold (see proof in Appendix C):

\textit{1) State-like evolution:} There exists a Lipschitz continuous function $\psi:\hat{\mathcal{S}} \times \mathcal{U} \times \mathcal{Y} \to \mathcal{S}$, such that
\begin{gather}
    \hat{\sigma}_{t+1}(M_{t+1}) = \psi(\hat{\sigma}_t(M_t), U_t, Y_{t+1}). \label{ap2a}
\end{gather}

\textit{2) Sufficient to approximate outputs:} For all $m_t \in \mathcal{M}_t$ and $u_t \in \mathcal{U}$, there exists a constant $\delta \in \mathbb{R}_{\geq0}$ such that
\begin{gather}
    \mathcal{H}([[C_t, Y_{t+1}|m_t, u_t]]], [[C_t, Y_{t+1}|\hat{\sigma}_{t}(m_t), u_t]]) \leq \delta. \label{ap2b}
\end{gather}

\section{Numerical Example} \label{section:example}

We consider an agent pursuing a target across a $5 \times 5$ grid with obstacles. At each $t \in \mathbb{N}$, the agent's position $X_t^\text{ag}$ and the target's position $X_t^{\text{ta}}$ each take values in the set of grid cells $\mathcal{X} = \big\{(0,0),(0,1),$ $\dots,(4,4)\big\} \setminus \mathcal{O}$, where $\mathcal{O} \subset \mathcal{X}$ is the set of obstacles. Let $\mathcal{W} = \{(-1,0),(1,0),(0,0),(0,1),(0,-1)\}$, $\mathcal{N} = \{(0,-1),(0,0),(0,1)\}$, and $\mathcal{U} = \mathcal{W} \times \{\xi\}$, where $\xi$ denotes a ``stop'' action. Starting at $X_0^{\text{ta}} \in \mathcal{X}$, the target's position evolves as $X_{t+1}^{\text{ta}} = \delta(X_t^{\text{ta}} + W_t \in \mathcal{X}){\cdot}(X_t^{\text{ta}} + W_t) + (1 - \delta(X_t^{\text{ta}} + W_t \in \mathcal{X}) ){\cdot} X_t^{\text{ta}}$, where $W_t \in \mathcal{W}$ and $\delta$ is returns $1$ or $0$ after checking the argument. 
At each $t$, the agent observes their own position perfectly and the target's position as $Y_t = \delta(X_t^{\text{ta}} + N_t \in \mathcal{X}){\cdot}(X_t^{\text{ta}} + N_t) + (1-\delta(X_t^{\text{ta}} + N_t \in \mathcal{X}) ){\cdot} X_t^{\text{ta}}$, where $N_t \in \mathcal{N}$. 
Then, the agent selects an action $U_t \in \mathcal{U}$ which is either to move or to stop. If the agent moves, i.e., $U_t \neq \xi$, then $X_{t+1}^{\text{ag}} = \delta(X_t^{\text{ag}} + U_t \in \mathcal{X}){\cdot}(X_t^{\text{ag}} + U_t) + (1 - \delta(X_t^{\text{ag}} + U_t \in \mathcal{X}) ){\cdot} X_t^{\text{ag}}$. 
The agent incurs a cost $C_t = 2$. If the agent stops, i.e., $U_t = \xi$, they incur a terminal cost $10 {\cdot} \eta(X_T^{\text{ta}}, X_T^{\text{ag}})$ for the L$1$ distance from the target. 
We illustrate this pursuit problem in Fig. \ref{fig:1a}, where the black cells are obstacles, the red triangle is the agent, the blue circle is the observation, and the blue disk is the target.

\begin{figure}[t!]
  \centering
  \subfigure[The grid \label{fig:1a}]{\includegraphics[width=0.27\linewidth, keepaspectratio]{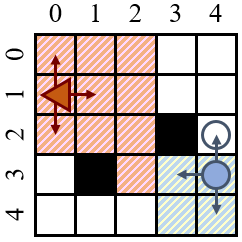}}
  \hspace{10pt}
  \subfigure[Encoder-decoder architecture \label{fig:1b}]{\includegraphics[width=0.63\linewidth, keepaspectratio]{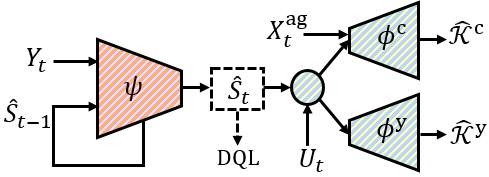}}
  \vspace{-6pt}
  \caption{The pursuit problem with $x_0^{\text{ag}} = (0,1)$, $x_0^{\text{ta}} = (4,3)$ and $y_0 = (4,2)$ is in (a). The neural network architecture for the AIS is in (b).}
  \label{fig:illustration}
  \vspace{-20pt}
\end{figure}

We consider the pursuit problem when the agent is aware of their own dynamics, but unaware of the observation model and target's dynamics.
Thus, we train an approximate information state (AIS) model to learn a representation of the target's dynamics using observations, actions, and incurred costs to enforce \eqref{ap2a} and \eqref{ap2b}. 
The AIS is generated by a neural networks in an encoder-decoder architecture, as shown in Fig. \ref{fig:1b}.
At each $t \in \mathbb{N}$, the \textit{encoder} $\psi$ receives as an input the observation $Y_t$ and previous AIS $\hat{S}_{t-1}$ and generates $\hat{S}_t$. It consists of a linear layer of size $(2,4)$ with ReLU activation, followed by a gated recurrent unit (GRU) with a hidden state size of $4$. 
The hidden state of the GRU constitutes the AIS $\hat{S}_t$ updated recurrently as $\hat{S}_{t} = \psi(\hat{S}_{t-1}, Y_t)$, thus enforcing \eqref{ap2a}. Note that our AIS is independent from the agent's position and action because the target moves independent from the agent.
The decoder is comprised of two separate units, each of which is selected according to the action $U_t$. If $U_t = \xi$, we use the network $\phi^{\text{c}}$ which takes as an input the agent's position $X^{\text{ag}}_t$ and the AIS $\hat{S}_t$ and generates a set of possible terminal costs $\hat{\mathcal{K}}^{\text{c}} := [[C_t|X_t^{\text{ag}}, \hat{S}_t]]$. This network comprises of two linear layers with dimensions $(6,16)$ and $(16,9)$, where the first layer has ReLU activation and the second has sigmoid activation. If $U_t \neq \xi$, we use the network $\phi^{\text{y}}$ which takes the AIS $\hat{S}_t$ as an input and generates the conditional range $\hat{\mathcal{K}}^{\text{y}} :=[[Y_{t+1}|\hat{S}_t]]$.
This network comprises of two linear layers with dimensions $(6,16)$ and $(16,23)$, where the first layer has ReLU activation and the second has sigmoid activation. 

We train the entire model simultaneously using the outputs of the decoder. At each $t \in \mathbb{N}$, the training loss is given by the Hausdorff distance between the one-hot encoded incoming data point, either $C_t$ or $Y_{t+1}$, and the current predicted set.
Since the Hausdorff distance is not differentiable, we adapt the distance-transform-based surrogate loss proposed in \cite{karimi2019reducing}. Note that we cannot observe the true underlying set and thus train the predictions against sampled data points to eventually learn the feasible sets.
We train the network for $3 \times 10^{6}$ instances with a learning rate of $0.0003$. In each instance, we randomly initialize the agent and target's positions from the pink and blue hatched cells in Fig. \ref{fig:1a} and randomize all subsequent noises, disturbances and actions. 

Next, we utilize the trained encoder's output AIS and the agent's position as a state input to a deep Q-learning network (DQN) with two layers of $(6,3)$ and $(3,6)$ and a LeakyReLU activation each. We train this AIS-DQN using an exploratory policy for $3 \times 10^{6}$ instances with a learning rate of $0.0005$ using a maximally risk-averse approach from \cite{mihatsch2002risk} with high risk-aversion $\kappa = 0.9$, to learn to minimize the worst-case discounted cost with $\gamma = 0.97$. We compare the worst-case performance of the greedy strategy of the trained AIS-DQN with the worst-case performance of a trained stochastic-DQN, which uses the observation and position as the state and has the same hyperparameters with no risk-aversion.
In Fig. \ref{fig:results}, we present the improvement in worst-case cost achieved by AIS-DQN over stochastic-DQN in $10^4$ simulations each for different initial positions. Note that AIS-DQN outperforms stochastic-DQN for most cases.


\begin{figure}[ht!]
  \centering
  \vspace{-10pt}
  \includegraphics[height = 6cm, keepaspectratio]{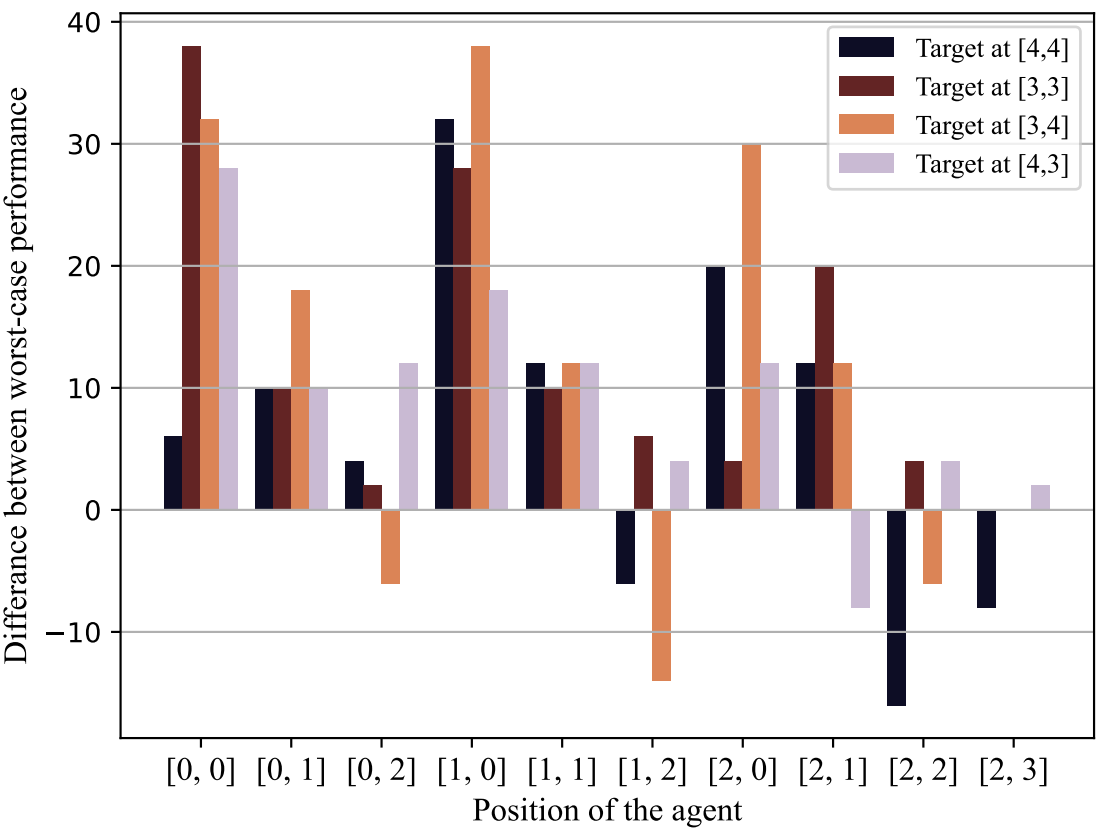} 
  \vspace{-10pt}
  \caption{The improvement in worst-case performance using AIS-DQL over stochastic-DQL.}
  \vspace{-15pt}
  \label{fig:results}
\end{figure}

\section{Conclusions} \label{section:conclusion}

In this paper, we provided a general notion of information states for worst-case decision-making problems over an infinite time horizon without a known state-space model. We showed that these information states yield a time-invariant DP decomposition to compute an optimal control strategy. Then, we specialized this notion to problems with observable costs and extended it to define approximate information states. We proved that approximate information states assist in computing an approximate control strategy with a bounded loss in worst-case performance. 
Finally, we illustrated using a numerical example, how approximate information states can be learned using output data and used to generate control strategies.
Future work should consider using these results in applications to requiring approximately worst-case control and worst-case reinforcement learning.


\bibliographystyle{ieeetr}
\bibliography{References,Latest_IDS}

\begin{thebibliography}{10}

\bibitem{Malikopoulos2020}
A.~A. Malikopoulos, L.~E. Beaver, and I.~V. Chremos, ``Optimal time trajectory
  and coordination for connected and automated vehicles,'' {\em Automatica},
  vol.~125, no.~109469, 2021.

\bibitem{zhu2011robust}
Q.~Zhu and T.~Ba\c{s}ar, ``Robust and resilient control design for
  cyber-physical systems with an application to power systems,'' in {\em 2011
  50th IEEE Conference on Decision and Control and European Control
  Conference}, pp.~4066--4071, IEEE, 2011.

\bibitem{Malikopoulos2022a}
A.~A. Malikopoulos, ``Separation of learning and control for cyber-physical
  systems,'' {\em Automatica}, vol.~151, no.~110912, 2023.

\bibitem{Dave2021nestedaccess}
A.~Dave, N.~Venkatesh, and A.~A. Malikopoulos, ``On decentralized control of
  two agents with nested accessible information,'' in {\em 2022 American
  Control Conference (ACC)}, pp.~3423--3430, IEEE, 2022.

\bibitem{mannor2007bias}
S.~Mannor, D.~Simester, P.~Sun, and J.~N. Tsitsiklis, ``Bias and variance
  approximation in value function estimates,'' {\em Management Science},
  vol.~53, no.~2, pp.~308--322, 2007.

\bibitem{iyengar2005robust}
G.~N. Iyengar, ``Robust dynamic programming,'' {\em Mathematics of Operations
  Research}, vol.~30, no.~2, pp.~257--280, 2005.

\bibitem{wiesemann2013robust}
W.~Wiesemann, D.~Kuhn, and B.~Rustem, ``Robust markov decision processes,''
  {\em Mathematics of Operations Research}, vol.~38, no.~1, pp.~153--183, 2013.

\bibitem{mihatsch2002risk}
O.~Mihatsch and R.~Neuneier, ``Risk-sensitive reinforcement learning,'' {\em
  Machine learning}, vol.~49, pp.~267--290, 2002.

\bibitem{baauerle2017partially}
N.~B{\"a}auerle and U.~Rieder, ``Partially observable risk-sensitive markov
  decision processes,'' {\em Mathematics of Operations Research}, vol.~42,
  no.~4, pp.~1180--1196, 2017.

\bibitem{rasouli2018scalable}
M.~Rasouli, E.~Miehling, and D.~Teneketzis, ``A scalable decomposition method
  for the dynamic defense of cyber networks,'' in {\em Game Theory for Security
  and Risk Management}, pp.~75--98, Springer, 2018.

\bibitem{shoukry2013minimax}
Y.~Shoukry, J.~Araujo, P.~Tabuada, M.~Srivastava, and K.~H. Johansson,
  ``Minimax control for cyber-physical systems under network packet scheduling
  attacks,'' in {\em Proceedings of the 2nd ACM international conference on
  High confidence networked systems}, pp.~93--100, 2013.

\bibitem{bertsekas1973sufficiently}
D.~Bertsekas and I.~Rhodes, ``Sufficiently informative functions and the
  minimax feedback control of uncertain dynamic systems,'' {\em IEEE
  Transactions on Automatic Control}, vol.~18, no.~2, pp.~117--124, 1973.

\bibitem{bernhard2000max}
P.~Bernhard, ``Max-plus algebra and mathematical fear in dynamic
  optimization,'' {\em Set-Valued Analysis}, vol.~8, no.~1, pp.~71--84, 2000.

\bibitem{gagrani2017decentralized}
M.~Gagrani and A.~Nayyar, ``Decentralized minimax control problems with partial
  history sharing,'' in {\em 2017 American Control Conference (ACC)},
  pp.~3373--3379, IEEE, 2017.

\bibitem{coraluppi1999risk}
S.~P. Coraluppi and S.~I. Marcus, ``Risk-sensitive and minimax control of
  discrete-time, finite-state markov decision processes,'' {\em Automatica},
  vol.~35, no.~2, pp.~301--309, 1999.

\bibitem{Dave2021minimax}
A.~Dave, N.~Venkatesh, and A.~A. Malikopoulos, ``On decentralized minimax
  control with nested subsystems,'' in {\em 2022 American Control Conference
  (ACC)}, pp.~3437--3444, IEEE, 2022.

\bibitem{Dave2023approximate}
A.~Dave, N.~Venkatesh, and A.~A. Malikopoulos, ``{Approximate Information
  States for Worst-Case Control and Learning in Uncertain Systems},'' {\em
  arXiv:2301.05089 (in review)}, 2023.

\bibitem{james1994risk}
M.~R. James, J.~S. Baras, and R.~J. Elliott, ``Risk-sensitive control and
  dynamic games for partially observed discrete-time nonlinear systems,'' {\em
  IEEE transactions on automatic control}, vol.~39, no.~4, pp.~780--792, 1994.

\bibitem{bernhard2003minimax}
P.~Bernhard, ``Minimax - or feared value - $\text{$L$}_1$ / $\text{$L$}_\infty$
  control,'' {\em Theoretical computer science}, vol.~293, no.~1, pp.~25--44,
  2003.

\bibitem{baras1998robust}
J.~S. Baras and N.~S. Patel, ``Robust control of set-valued discrete-time
  dynamical systems,'' {\em IEEE Transactions on Automatic Control}, vol.~43,
  no.~1, pp.~61--75, 1998.

\bibitem{Dave2022approx}
A.~Dave, N.~Venkatesh, and A.~A. Malikopoulos, ``Approximate information states
  for worst-case control of uncertain systems,'' in {\em Proceedings of the
  61th IEEE Conference on Decision and Control (CDC)}, pp.~4945--4950, 2022.

\bibitem{dave2022additive}
A.~Dave, N.~Venkatesh, and A.~A. Malikopoulos, ``On robust control of partially
  observed uncertain systems with additive costs,'' in {\em Proceedings of the
  2023 American Control Conference}, 2023 (to appear).

\bibitem{subramanian2022approximate}
J.~Subramanian, A.~Sinha, R.~Seraj, and A.~Mahajan, ``Approximate information
  state for approximate planning and reinforcement learning in partially
  observed systems,'' {\em Journal of Machine Learning Research}, vol.~23,
  no.~12, pp.~1--83, 2022.

\bibitem{nair2013nonstochastic}
G.~N. Nair, ``A nonstochastic information theory for communication and state
  estimation,'' {\em IEEE Transactions on automatic control}, vol.~58, no.~6,
  pp.~1497--1510, 2013.

\bibitem{barnsley2006superfractals}
M.~F. Barnsley, {\em Superfractals}.
\newblock Cambridge University Press, 2006.

\bibitem{karimi2019reducing}
D.~Karimi and S.~E. Salcudean, ``Reducing the hausdorff distance in medical
  image segmentation with convolutional neural networks,'' {\em IEEE
  Transactions on medical imaging}, vol.~39, no.~2, pp.~499--513, 2019.

\end{thebibliography}

\newpage

\section*{Appendix A - Proof that the Dynamic Programming Operator is a Contraction Mapping}

In this appendix, we prove that the operator $\mathcal{T}$ defined in \eqref{eq_general_value_operator} is a contraction mapping.

\begin{lemma}
    Consider the operator $\mathcal{T}$ defined in \eqref{eq_general_value_operator}. There exists a constant $\alpha \in [0,1)$ such that for two functions $\Lambda:\mathcal{S} \times (0,1] \to \mathbb{R}$ and $\tilde{\Lambda}:\mathcal{S} \times (0,1] \to \mathbb{R}$:
    \begin{gather} \label{eq_contraction_map}
        ||\mathcal{T}\Lambda - \mathcal{T}\tilde{\Lambda}||_{\infty} \leq \alpha {\cdot} ||\Lambda - \tilde{\Lambda}||_{\infty}.
    \end{gather}    
\end{lemma}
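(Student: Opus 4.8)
The plan is to show that $\mathcal{T}$ is a $\gamma$-contraction in the supremum norm, so $\alpha = \gamma \in (0,1)$ works. The key structural fact is that the operator $\mathcal{T}$ acts on the candidate value function $\Lambda$ only through the single term $\gamma \cdot \Lambda(s', \gamma z)$ inside the inner supremum, while the remaining terms $c + \rho(c,s'\,|\,s,u)\cdot z^{-1}$ do not depend on $\Lambda$ at all. So the heart of the argument is a standard ``sup of a sum, inf of a sum'' inequality manipulation.

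First I would fix an arbitrary $s \in \mathcal{S}$ and $z \in (0,1]$ and write out $[\mathcal{T}\Lambda](s,z)$ and $[\mathcal{T}\tilde\Lambda](s,z)$ side by side. For each fixed $u \in \mathcal{U}$, I would bound the difference of the inner suprema: using the elementary fact that $|\sup_x f(x) - \sup_x h(x)| \le \sup_x |f(x) - h(x)|$, the difference of the two inner suprema over $(c,s')$ is at most $\sup_{c,s'} \gamma \cdot |\Lambda(s', \gamma z) - \tilde\Lambda(s', \gamma z)| \le \gamma \cdot \|\Lambda - \tilde\Lambda\|_\infty$, since the non-$\Lambda$ terms cancel exactly. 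Then I would apply the same $|\inf_u f(u) - \inf_u h(u)| \le \sup_u |f(u) - h(u)|$ inequality over $u \in \mathcal{U}$ to pass from the inner-sup bound to a bound on $|[\mathcal{T}\Lambda](s,z) - [\mathcal{T}\tilde\Lambda](s,z)|$. This yields $|[\mathcal{T}\Lambda](s,z) - [\mathcal{T}\tilde\Lambda](s,z)| \le \gamma \cdot \|\Lambda - \tilde\Lambda\|_\infty$ for every $(s,z)$, and taking the supremum over $(s,z)$ on the left gives \eqref{eq_contraction_map} with $\alpha = \gamma$.

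A couple of minor technical points I would address along the way: I should note that $\mathcal{T}\Lambda$ is again uniformly bounded whenever $\Lambda$ is (so the supremum norms are finite and the operator genuinely maps the space to itself) — this uses the boundedness of $\mathcal{C}$, the fact that $\sup_{c,s'}\rho(c,s'\,|\,s,u)=0$, and the uniform bound on $\Lambda$, as already observed in the remark following \eqref{eq_general_value_operator}. I should also be slightly careful that the remark handles the $z \to 0$ behaviour so that all quantities involved are finite; since $z \in (0,1]$ is bounded away from nothing problematic in the $z^{-1}$ term only through the cost-distribution normalization, the difference term $\gamma\cdot|\Lambda(s',\gamma z) - \tilde\Lambda(s',\gamma z)|$ never sees the $z^{-1}$ factor, which is exactly why the contraction constant is clean.

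The main obstacle here is essentially bookkeeping rather than a deep idea: making sure the two-sided ``$|\sup - \sup|$'' and ``$|\inf - \inf|$'' estimates are applied in the right order and that the cancellation of the $c + \rho(c,s'\,|\,s,u)z^{-1}$ terms is legitimate (it is, because these terms are identical in both expressions and the suprema are over the same sets $\mathcal{C} \times \mathcal{S}$). There is no fixed-point or compactness subtlety needed for the contraction claim itself — the Banach fixed-point conclusion is invoked elsewhere — so the proof should be short and self-contained once the elementary sup/inf inequalities are stated.
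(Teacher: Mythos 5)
Your proposal is correct and follows essentially the same route as the paper's Appendix A proof: bound the difference of the inf--sup expressions by the supremum of the pointwise difference, observe that the $c + \rho(c,s'\,|\,s,u)\cdot z^{-1}$ terms cancel so only $\gamma\cdot|\Lambda(s',\gamma z)-\tilde\Lambda(s',\gamma z)|$ survives, and use $(0,\gamma]\subset(0,1]$ to conclude $\alpha=\gamma$. The extra remarks on uniform boundedness and the harmlessness of the $z^{-1}$ term are consistent with the paper's Remark following \eqref{eq_general_value_operator}.
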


\begin{proof}
    Using the definition of $\mathcal{T}$, we expand the left hand side (LHS) of \eqref{eq_contraction_map} as $||\mathcal{T}\Lambda - \mathcal{T}\tilde{\Lambda}||_{\infty} 
        = \sup_{s \in \mathcal{S}, z \in (0,1]}$ $\big|\inf_{u \in \mathcal{U}} \sup_{s' \in \mathcal{S}, c \in \mathcal{C}}( c + \gamma {\cdot} \Lambda(s', \gamma {\cdot} z) + \frac{\rho(c, s' ~|~ s, u)}{z})$
        $- \inf_{u \in \mathcal{U}} \sup_{s' \in \mathcal{S}, c \in \mathcal{C}}( c + \gamma {\cdot} \tilde{\Lambda}(s', \gamma {\cdot} z) + \frac{\rho(c, s' ~|~ s, u)}{z})\big|$
        $\leq \gamma {\cdot} \sup_{s' \in \mathcal{S}, \gamma {\cdot} z \in (0,\gamma]} |\Lambda(s', \gamma {\cdot} z) - \tilde{\Lambda}(s', \gamma {\cdot} z) | \leq \gamma {\cdot} \sup_{s \in \mathcal{S}, z \in (0,1]} |\Lambda(s, z) - \tilde{\Lambda}(s, z)|$
        $= \gamma {\cdot} ||\Lambda - \tilde{\Lambda}||_{\infty}$,
    where, in the first inequality, we upper bound the difference between supremum values of two functions by the supremum difference between the two functions; and in the second inequality, we use $(0, \gamma] \subset (0,1]$ in the argument of the supremum. This proves that the operator $\mathcal{T}$ is a contraction mapping by setting $\alpha = \gamma$.
\end{proof}

\section*{Appendix B - Proof that Information States Yield an Optimal Control Strategy}

In this appendix, we prove that the information-state based control strategy $\boldsymbol{\pi}^* = (\pi^*, \pi^*, \dots)$ and corresponding memory-based control strategy $\boldsymbol{g}^* = (g_0^*, g_1^*, \dots)$ defined in Subsection \ref{subsection:basic_info_states} are optimal solutions to Problem \ref{problem_1}. Recall from Subsection \ref{subsection:basic_info_states} that the information-based control law is $\pi^*(s, z) = \arg \min_{u \in \mathcal{U}} \sup_{c \in \mathcal{C}, s' \in \mathcal{S}}(c + \gamma {\cdot} \Lambda^{\infty}(s', \gamma {\cdot}z) + \frac{\rho(c, s'|s, u)}{z})$ for all $s \in \mathcal{S}$ and $z \in (0,1]$. Furthermore, recall that the memory-based control law is $g_t^*(m_t) = \pi^*(\sigma_t(m_t), \gamma^t)$ for all $m_t \in \mathcal{M}_t$ and $t \in \mathbb{N}$.
To begin, for any time-invariant control law $\pi: \mathcal{S} \times (0,1] \to \mathcal{U}$, we define a law-dependent operator $\mathcal{T}(\pi):[\mathcal{S} \times (0,1] \to \mathbb{R}] \to [\mathcal{S} \times (0,1] \to \mathbb{R}]$, such that for any uniformly bounded function $\Lambda:\mathcal{S} \times (0,1] \to \mathbb{R}$, we have:
\begin{multline} \label{eq_pi_dependent_operator}
    [\mathcal{T}(\pi) \Lambda](s, z) \\
    := \sup_{c \in \mathcal{C}, s' \in \mathcal{S}} \Bigg(c + \gamma {\cdot} \Lambda (s', \gamma {\cdot} z)
    + \frac{\rho(c, s'|s, \pi(s,z) )}{z} \Bigg). 
\end{multline}
Note that the control action in the RHS of \eqref{eq_pi_dependent_operator} is selected using as $u = \pi(s, z)$. Furthermore, by definition of $\pi^*$ it holds that for all $s \in \mathcal{S}$ and $z \in (0,1]$:
\begin{gather} \label{eq_appendix_b_pi_star}
    [\mathcal{T}(\pi^*) \Lambda^{\infty}](s, z) = [\mathcal{T} \Lambda^{\infty}](s,z) = \Lambda^{\infty}(s,z).
\end{gather}
Then, we can construct a corresponding memory-based control law at each ${{t \in \mathbb{N}}}$ as $g_t(m_t) := \pi(\sigma_t(m_t), \gamma^t)$ and a memory-based control strategy $\boldsymbol{g} = (g_0, g_1, \dots)$. Next, we establish that we can use $[\mathcal{T}(\pi)^n \Lambda^0](\sigma_t(m_t), \gamma^t)$ for any ${{n \in \mathbb{N}}}$ to estimate the strategy-dependent value $V^{\boldsymbol{g}}_t(m_t)$ at each ${{t \in \mathbb{N}}}$.

\begin{lemma} \label{lem_appendix_b}
    For all ${{t \in \mathbb{N}}}$ and all ${{n \in \mathbb{N}}}$, it holds that:
    \begin{multline} \label{eq_lem_appendix_b}
        \frac{\gamma^{n+t}{\cdot} c^{\min}}{1-\gamma} + \gamma^t {\cdot} [\mathcal{T}(\pi)^n \Lambda^0](\sigma_t(m_t), \gamma^t) + \sup_{a_t \in [[A_t|m_t]]}a_t \\
        \leq V^{\boldsymbol{g}}_t(m_t) \\ 
        \leq \sup_{a_t \in [[A_t|m_t]]}a_t + \gamma^t {\cdot} [\mathcal{T}(\pi)^n \Lambda^0](\sigma_t(m_t), \gamma^t)  + \frac{\gamma^{n+t} {\cdot} c^{\max}}{1-\gamma}.
    \end{multline}
\end{lemma}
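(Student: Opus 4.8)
The plan is to mirror the proof of Theorem~\ref{thm_g_opt} almost verbatim, replacing the optimal operator $\mathcal{T}$ by the law-dependent operator $\mathcal{T}(\pi)$ and the optimal finite-horizon functions $J_t(\cdot;T)$ by the strategy-dependent finite-horizon functions $J_t^{\boldsymbol{g}}(\cdot;T)$. The engine is Lemma~\ref{finite_DP_memory}, part~(a), which already bounds $V_t^{\boldsymbol{g}}(m_t)$ by $J_t^{\boldsymbol{g}}(m_t;T)$ up to an error of $\gamma^{T+1}c^{\max}/(1-\gamma)$ above and $\gamma^{T+1}c^{\min}/(1-\gamma)$ below. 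So the real content is the identity
\begin{gather} \label{eq_interim_appendix_b}
    J_t^{\boldsymbol{g}}(m_t;T) = \gamma^t {\cdot} [\mathcal{T}(\pi)^{T-t+1}\Lambda^0](\sigma_t(m_t), \gamma^t) + \sup_{a_t \in [[A_t|m_t]]} a_t,
\end{gather}
for each $t = 0,\dots,T$, which I would prove by backward induction on $t$.

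First, the base case $t=T$: by definition $J_T^{\boldsymbol{g}}(m_T;T) = \sup_{a_T,c_T \in [[A_T,C_T|m_T]]^{\boldsymbol{g}}}(a_T + \gamma^T c_T)$, and since along a fixed strategy the action $u_T = g_T(m_T) = \pi(\sigma_T(m_T),\gamma^T)$ is determined by $m_T$, the conditional range $[[A_T,C_T|m_T]]^{\boldsymbol{g}}$ equals $[[A_T,C_T|m_T,u_T]]$ with that particular $u_T$. Then I repeat the indicator/accrued-distribution manipulation from Theorem~\ref{thm_g_opt}: add and subtract $\sup_{a_T\in\mathcal{A}}(a_T + \mathbb{I}(a_T|m_T,u_T))$, recognize $\sup_{a_T\in[[A_T|m_T]]}a_T$ factoring out, and identify the remainder as $\gamma^T \sup_{c_T\in\mathcal{C}}(c_T + \gamma^{-T} r_T(c_T|m_T,u_T)) = \gamma^T[\mathcal{T}(\pi)\Lambda^0](\sigma_T(m_T),\gamma^T)$, using $\Lambda^0 \equiv 0$ and the information-state property $r_T(c_T,s_{T+1}|m_T,u_T) = \rho(c_T,s_{T+1}|\sigma_T(m_T),u_T)$ together with the definition of $\mathcal{T}(\pi)$ in \eqref{eq_pi_dependent_operator}, where the action is pinned to $\pi(\sigma_T(m_T),\gamma^T)$ rather than infimized.

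For the inductive step, assume \eqref{eq_interim_appendix_b} at $t+1$. Using $J_t^{\boldsymbol{g}}(m_t;T) = \sup_{m_{t+1}\in[[M_{t+1}|m_t]]^{\boldsymbol{g}}} J_{t+1}^{\boldsymbol{g}}(m_{t+1};T)$, substitute the hypothesis, push the supremum over $m_{t+1}$ inward, use $a_{t+1} = a_t + \gamma^t c_t$ to re-split the accrued-cost term, and note that $[[M_{t+1}|m_t]]^{\boldsymbol{g}} = [[M_{t+1}|m_t,u_t]]$ with $u_t = g_t(m_t) = \pi(\sigma_t(m_t),\gamma^t)$; then the same accrued-distribution bookkeeping as in the base case collapses everything to $\gamma^t[\mathcal{T}(\pi)^{T-t+1}\Lambda^0](\sigma_t(m_t),\gamma^t) + \sup_{a_t\in[[A_t|m_t]]}a_t$. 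Finally, given \eqref{eq_interim_appendix_b}, set $T = t+n-1$ and substitute into the bounds of \eqref{eq_finite_DP_memory_g} to obtain \eqref{eq_lem_appendix_b}, since $T-t+1 = n$ and $\gamma^{T+1} = \gamma^{n+t}$.

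The main obstacle, as in Theorem~\ref{thm_g_opt}, is the careful handling of the accrued cost: showing that conditioning on $m_t$ (or $m_{t+1}$) fixes the realization of $A_t$, so that $\sup_{a_t\in[[A_t|m_t]]}a_t$ cleanly separates out of the supremum and the residual depends on $m_t$ only through $\sigma_t(m_t)$ via the cost distribution $\rho$. The one genuinely new point relative to Theorem~\ref{thm_g_opt} — and it is minor — is simply that there is no $\inf_{u\in\mathcal{U}}$ to carry along: the action is frozen at $\pi(\sigma_t(m_t),\gamma^t)$ throughout, which is exactly what makes the law-dependent operator $\mathcal{T}(\pi)$ appear in place of $\mathcal{T}$. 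I would also remark that uniform boundedness of $[\mathcal{T}(\pi)^n\Lambda^0]$ (needed for the statement to be meaningful and for the later limit $n\to\infty$) follows from the same contraction-style estimate as in Appendix~A, since $\mathcal{T}(\pi)$ inherits the $\gamma$-contraction property with the infimum removed.
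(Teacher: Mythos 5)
Your proposal is correct and follows essentially the same route as the paper, which itself disposes of this lemma in one line by saying to repeat the argument of Theorem~\ref{thm_g_opt} with \eqref{eq_finite_DP_memory_g} in place of \eqref{eq_finite_DP_memory_opt} and the action frozen at $u_t = \pi(\sigma_t(m_t),\gamma^t)$. Your intermediate identity for $J_t^{\boldsymbol{g}}(m_t;T)$, its backward induction, and the final substitution with $T = t+n-1$ are exactly the details the paper leaves implicit.
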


\begin{proof}
    The proof follows using the same sequence of arguments as the proof for Theorem \ref{thm_g_opt}, but by using \eqref{eq_finite_DP_memory_g} from Lemma \ref{finite_DP_memory} along with $u_t = g_t(m_t):= \pi(\sigma_t(m_t), \gamma^t)$.
\end{proof}

We can set $t=0$ in \eqref{eq_lem_appendix_b} and note that
\begin{multline}
    \frac{\gamma^{n}{\cdot} c^{\min}}{1-\gamma} + [\mathcal{T}(\pi)^n \Lambda^0](\sigma_0(y_0), 1) \; \leq \; V^{\boldsymbol{g}}_t(m_t) \\ 
    \leq [\mathcal{T}(\pi)^n \Lambda^0](\sigma_t(m_t), 1) + \frac{\gamma^{n} {\cdot} c^{\max}}{1-\gamma},
\end{multline}
where recall that $A_0 = 0$ always. Thus, as a direct consequence of Lemma \ref{lem_appendix_b}, it holds that
$[\lim_{n \to \infty} \mathcal{T}(\pi)^n] \Lambda^0(\sigma_0(y_0), 1) = V_0^{\boldsymbol{g}}(y_0)$.
Next, we prove that $\lim_{n \to \infty}\mathcal{T}(\pi^*)^n = \Lambda^{\infty}$. 

\begin{lemma} \label{lem_appendix_b_2}
    For all $s \in \mathcal{S}$ and $z \in (0,1]$, it holds that
    \begin{gather} \label{eq_appendex_b_2}
        [\lim_{n \to \infty}\mathcal{T}(\pi^*)^n](s,z) = \Lambda^{\infty}(s,z).
    \end{gather}
\end{lemma}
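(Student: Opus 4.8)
The plan is to argue via the Banach fixed point theorem, exactly as in the construction of $\Lambda^{\infty}$ in Subsection \ref{subsection:basic_info_states}. First I would show that the law-dependent operator $\mathcal{T}(\pi^*)$ is a contraction mapping on the space of uniformly bounded functions $\Lambda:\mathcal{S}\times(0,1]\to\mathbb{R}$ equipped with the sup-norm, which is a complete metric space. The argument mirrors Appendix A: for two uniformly bounded functions $\Lambda$ and $\tilde{\Lambda}$, I expand $\|\mathcal{T}(\pi^*)\Lambda - \mathcal{T}(\pi^*)\tilde{\Lambda}\|_{\infty}$ using \eqref{eq_pi_dependent_operator}, bound the difference of two suprema by the supremum of the difference, and note that the terms $c$ and $\rho(c,s'\,|\,s,\pi^*(s,z)){\cdot}z^{-1}$ cancel, which leaves $\gamma{\cdot}\sup_{s'\in\mathcal{S},\,\gamma z\in(0,\gamma]}|\Lambda(s',\gamma z)-\tilde{\Lambda}(s',\gamma z)|\leq\gamma{\cdot}\|\Lambda-\tilde{\Lambda}\|_{\infty}$, since $(0,\gamma]\subset(0,1]$. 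One also checks that $\mathcal{T}(\pi^*)$ maps uniformly bounded functions to uniformly bounded functions, using $c\leq c^{\max}$ and $\sup_{c\in\mathcal{C},\,s'\in\mathcal{S}}\rho(c,s'\,|\,s,u)=0$.

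Next, by the Banach fixed point theorem, $\mathcal{T}(\pi^*)$ admits a unique fixed point in this space, and the fixed-point iteration $\mathcal{T}(\pi^*)^n\Lambda^0$ started from $\Lambda^0\equiv 0$ converges to it in sup-norm, hence pointwise. It therefore suffices to identify this fixed point. But \eqref{eq_appendix_b_pi_star} already states that $[\mathcal{T}(\pi^*)\Lambda^{\infty}](s,z)=[\mathcal{T}\Lambda^{\infty}](s,z)=\Lambda^{\infty}(s,z)$ for all $s\in\mathcal{S}$ and $z\in(0,1]$; that is, $\Lambda^{\infty}$ is itself a fixed point of $\mathcal{T}(\pi^*)$. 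Since $\Lambda^{\infty}$ is uniformly bounded, being the fixed point of the contraction $\mathcal{T}$, uniqueness forces $[\lim_{n\to\infty}\mathcal{T}(\pi^*)^n]\Lambda^0(s,z)=\Lambda^{\infty}(s,z)$, which is precisely \eqref{eq_appendex_b_2}.

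I do not anticipate a serious obstacle here; the only point requiring care is verifying that $\mathcal{T}(\pi^*)$ is well defined on, and preserves, the space of uniformly bounded functions — in particular the finiteness of the right-hand side of \eqref{eq_pi_dependent_operator} as $z\to 0$, which follows from the Remark after \eqref{eq_general_value_operator} applied with the single action $u=\pi^*(s,z)$ in place of the infimum over $u\in\mathcal{U}$.

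Finally, combining this lemma with Lemma \ref{lem_appendix_b} applied to $\pi=\pi^*$ at $t=0$ and letting $n\to\infty$ gives $V_0^{\boldsymbol{g}^*}(y_0)=[\lim_{n\to\infty}\mathcal{T}(\pi^*)^n\Lambda^0](\sigma_0(y_0),1)=\Lambda^{\infty}(\sigma_0(y_0),1)=V_0(y_0)$ by \eqref{eq_optimality_of_infor_state}, which establishes the optimality of $\boldsymbol{g}^*$ asserted in Subsection \ref{subsection:basic_info_states}.
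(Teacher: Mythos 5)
Your proof is correct, but it takes a genuinely different route from the paper's. The paper proves \eqref{eq_appendex_b_2} by a two-sided sandwich: for the lower bound on the iterates it notes that $[\mathcal{T}(\pi)\Lambda](s,z)\geq[\mathcal{T}\Lambda](s,z)$ pointwise for any law $\pi$ (since $\mathcal{T}$ takes the infimum over $u$ while $\mathcal{T}(\pi)$ plugs in a particular $u$), iterates, and passes to the limit to get $\lim_{n}\mathcal{T}(\pi^*)^n\Lambda^0\geq\Lambda^{\infty}$; for the upper bound it uses \eqref{eq_appendix_b_pi_star} to write $\Lambda^{\infty}=\lim_{n}\mathcal{T}(\pi^*)^n\Lambda^{\infty}$ and then exploits $\Lambda^{\infty}\geq\Lambda^0=0$ together with monotonicity of $\mathcal{T}(\pi^*)$ to conclude $\Lambda^{\infty}\geq\lim_{n}\mathcal{T}(\pi^*)^n\Lambda^0$. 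You instead verify directly that $\mathcal{T}(\pi^*)$ is a $\gamma$-contraction on the complete space of uniformly bounded functions (the Appendix A computation is in fact simpler here, as there is no infimum over $u$ to handle), observe from \eqref{eq_appendix_b_pi_star} that $\Lambda^{\infty}$ is a fixed point of $\mathcal{T}(\pi^*)$, and invoke uniqueness from the Banach fixed point theorem. Your route buys something the paper's argument leaves implicit: it establishes that $\lim_{n}\mathcal{T}(\pi^*)^n\Lambda^0$ actually exists (with geometric rate $\gamma^n$), whereas the paper manipulates this limit without proving convergence and silently relies on the monotonicity of the operators and on the sign condition $\Lambda^{\infty}\geq 0$. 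The paper's route, for its part, avoids re-proving the contraction property for the law-dependent operator. Your closing paragraph tying the lemma back to $V_0^{\boldsymbol{g}^*}(y_0)=V_0(y_0)$ via Lemma \ref{lem_appendix_b} and \eqref{eq_optimality_of_infor_state} matches the paper's use of the result.
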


\begin{proof}
    We begin by showing that the LHS of \eqref{eq_appendex_b_2} forms an upper bound on the RHS. By definition, note that $[\mathcal{T}(\pi)\Lambda](s, z) \geq [\mathcal{T}\Lambda](s, z)$ for any control law $\pi$ and for all $s \in \mathcal{S}$, $z \in (0,1]$. Taking the limit on both sides with $\pi = \pi^*$, this implies for all $s \in \mathcal{S}$, $z \in (0,1]$ that
    \begin{gather} \label{eq_b_2_1}
        [\lim_{n \to \infty} \mathcal{T}(\pi^*)^n \Lambda^{{{0}}}](s,z) \geq [\lim_{n \to \infty}\mathcal{T}^n \Lambda^{{{0}}}](s,z) = \Lambda^{\infty}(s,z).
    \end{gather} 
    Next, we prove that the LHS of  \eqref{eq_appendex_b_2} also forms a lower bound on the RHS. From \eqref{eq_appendix_b_pi_star}, it holds that $\Lambda^{\infty} = \mathcal{T}(\pi^*) \Lambda^{\infty} = \lim_{n \to \infty} \mathcal{T}(\pi^*)^n \Lambda^{\infty}$. Then, using $\Lambda^\infty \geq \Lambda^0 = 0$, we write for all $s \in \mathcal{S}$ and $z \in (0,1]$ that
    \begin{gather} \label{eq_b_2_2}
        \Lambda^{\infty}(s,z) = \lim_{n \to \infty} \mathcal{T}(\pi^*)^n \Lambda^{\infty} \geq [\lim_{n \to \infty} \mathcal{T}(\pi^*)^n \Lambda^{{{0}}}](s,z).
    \end{gather}
    Using \eqref{eq_b_2_1} and \eqref{eq_b_2_2} simultaneously establishes \eqref{eq_appendex_b_2}.
\end{proof}

Then, as a direct consequence of both Lemmas \ref{lem_appendix_b} and \ref{lem_appendix_b_2}, we can conclude that
\begin{multline}
    V_t^{\boldsymbol{g}^*}(y_0) = [\lim_{n \to \infty} \mathcal{T}(\pi)^n] \Lambda^0(\sigma_0(y_0), 1) \\
    = \Lambda^{\infty}(\sigma_0(y_0), 1) = V_0(y_0),
\end{multline}
where in the last equality, we use \eqref{eq_optimality_of_infor_state}. This proves that the control strategies $\boldsymbol{g}^*$ and $\boldsymbol{\pi}^*$ are optimal solutions to Problem \ref{problem_1}.

\section*{Appendix C - Proof that the Alternate Characterization Defines an Approximate Information State}

In this appendix, we prove that the alternate characterization presented in Subsection \ref{subsection:alternate} using the properties \eqref{ap2a} and \eqref{ap2b} is sufficient to establish \eqref{eq_def_approximate_info} in Definition \ref{def_approximate_info}.

\begin{lemma}
    For all ${{t \in \mathbb{N}}}$, if an uncertain variable $\hat{S}_t = \hat{\sigma}_t(M_t)$ satisfies \eqref{ap2a} - \eqref{ap2b}, it also satisfies \eqref{eq_def_approximate_info}.
\end{lemma}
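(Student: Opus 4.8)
The plan is to show that the Hausdorff distance in \eqref{eq_def_approximate_info} can be bounded by the quantity $\delta$ from \eqref{ap2b}, so that $\epsilon = \delta$ (possibly scaled by the Lipschitz constant of $\psi$) works. The key observation is that, by the state-like evolution property \eqref{ap2a}, the pair $(C_t, \hat{S}_{t+1})$ is a deterministic Lipschitz-continuous function of $(C_t, Y_{t+1})$ given $(\hat{\sigma}_t(m_t), u_t)$: indeed, $\hat{S}_{t+1} = \psi(\hat{\sigma}_t(m_t), u_t, Y_{t+1})$, so if we define the map $\Phi_{s,u}(c, y) := (c, \psi(s, u, y))$, then $\Phi_{\hat{\sigma}_t(m_t), u_t}$ is Lipschitz with some constant $L_\psi$ depending only on $\psi$, and it sends $[[C_t, Y_{t+1}\,|\,m_t, u_t]]$ onto $[[C_t, \hat{S}_{t+1}\,|\,m_t, u_t]]$ and $[[C_t, Y_{t+1}\,|\,\hat{\sigma}_t(m_t), u_t]]$ onto $[[C_t, \hat{S}_{t+1}\,|\,\hat{\sigma}_t(m_t), u_t]]$.

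First I would establish the elementary fact that a Lipschitz map $\Phi$ with constant $L_\Phi$ does not expand Hausdorff distance by more than a factor $L_\Phi$: for bounded nonempty sets $\mathscr{A}, \mathscr{B}$, we have $\mathcal{H}(\Phi(\mathscr{A}), \Phi(\mathscr{B})) \leq L_\Phi \cdot \mathcal{H}(\mathscr{A}, \mathscr{B})$. This follows directly from the definition \eqref{H_met_def} by estimating each of the two one-sided suprema: for any $a \in \mathscr{A}$, $\inf_{b \in \mathscr{B}} \eta(\Phi(a), \Phi(b)) \leq \inf_{b \in \mathscr{B}} L_\Phi \cdot \eta(a, b) \leq L_\Phi \cdot \sup_{a' \in \mathscr{A}} \inf_{b \in \mathscr{B}} \eta(a', b)$, and symmetrically.

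Second, I would apply this with $\Phi = \Phi_{\hat{\sigma}_t(m_t), u_t}$ to the two sets $[[C_t, Y_{t+1}\,|\,m_t, u_t]]$ and $[[C_t, Y_{t+1}\,|\,\hat{\sigma}_t(m_t), u_t]]$. The subtle point — and the main obstacle — is that in the left-hand set of \eqref{eq_def_approximate_info}, namely $[[C_t, \hat{S}_{t+1}\,|\,m_t, u_t]]$, the realized $\hat{s}_{t+1}$ is generated via $\psi(\hat{\sigma}_t(m_{t}), u_t, y_{t+1})$ using the \emph{actual} $\hat{\sigma}_t(m_t)$ consistent with $m_t$, so the same $\Phi$ (keyed to $\hat{\sigma}_t(m_t)$) does indeed push $[[C_t, Y_{t+1}\,|\,m_t, u_t]]$ onto it; one must check this is consistent with the definition of $\hat{S}_{t+1} = \psi(\hat\sigma_t(M_t), U_t, Y_{t+1})$ as an uncertain variable, i.e., that conditioning on $m_t$ fixes $\hat\sigma_t(m_t)$ exactly. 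Granting that, both $\hat{S}_{t+1}$-sets are images under the \emph{same} map $\Phi$ of the corresponding $Y_{t+1}$-sets, so combining the two steps gives $\mathcal{H}([[C_t, \hat{S}_{t+1}\,|\,m_t, u_t]], [[C_t, \hat{S}_{t+1}\,|\,\hat{\sigma}_t(m_t), u_t]]) \leq L_\psi \cdot \mathcal{H}([[C_t, Y_{t+1}\,|\,m_t, u_t]], [[C_t, Y_{t+1}\,|\,\hat{\sigma}_t(m_t), u_t]]) \leq L_\psi \cdot \delta$, and we conclude with $\epsilon := L_\psi \cdot \delta$.
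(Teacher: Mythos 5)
Your proposal is correct and follows essentially the same route as the paper's proof in Appendix C: both arguments view the two sets $[[C_t,\hat{S}_{t+1}\,|\,\cdot\,]]$ as images of the corresponding $[[C_t,Y_{t+1}\,|\,\cdot\,]]$ sets under the single map $(c,y)\mapsto(c,\psi(\hat{\sigma}_t(m_t),u_t,y))$, use the Lipschitz continuity of $\psi$ to show this map expands the Hausdorff distance by at most $L_\psi$, and conclude with $\epsilon=L_\psi\cdot\delta$. The only difference is presentational — you isolate the Lipschitz-image bound as a standalone lemma, while the paper carries out the same estimate inline on the two one-sided suprema in \eqref{H_met_def}.
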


\begin{proof}
Let $m_t \in \mathcal{M}_t$ be a given realization of $M_t$ and let $\hat{s}_t = \hat{\sigma}_t(s_t)$ satisfy \eqref{ap2a} - \eqref{ap2b}, for all $t$. Let $\mathcal{K}^\text{ob}_{t} :=[[C_t, Y_{t+1}|m_t,u_t]]$ and $\hat{\mathcal{K}}^\text{ob}_{t} :=[[C_t, Y_{t+1}|\hat{\sigma}_t(m_t),u_t]]$. 
Then, using \eqref{ap2a}, we can write the LHS in \eqref{eq_def_approximate_info} as
$\mathcal{H}\big([[C_t, \psi(\hat{\sigma}_t(m_t), u_t, Y_{t+1})|m_t, u_t]], [[C_t, \psi(\hat{\sigma}_t(m_t), u_t, $ $Y_{t+1})|\hat{\sigma}_t(m_t), u_t]] \big) = \max \big\{ \sup_{(c_t, y_{t+1}) \in \mathcal{K}^\text{ob}_t}  \inf_{(\hat{c}_t,\hat{y}_{t+1}) \in \hat{\mathcal{K}}^\text{ob}_{t}}$ $\big(\eta(c_t, \hat{c}_t) + \eta\big(\psi(\hat{\sigma}_t(m_t),u_t,y_{t+1}), \psi(\hat{\sigma}_t(m_t),u_t,\hat{y}_{t+1})\big)\big),$ $ \sup_{(\hat{c}_t,\hat{y}_{t+1}) \in \hat{\mathcal{K}}^\text{ob}_{t}}$ $\inf_{(c_t, y_{t+1}) \in \mathcal{K}^{\text{ob}}_{t}} \big(\eta(c_t, \hat{c}_t) + \eta\big(\psi(\hat{\sigma}_t(m_t),u_t,$ $y_{t+1}), \psi(\hat{\sigma}_t(m_t),u_t,\hat{y}_{t+1})\big)\big)\big\},$ where, in the second equality, we use the definition of the Hausdorff distance from \eqref{H_met_def}. Note that $\psi$ is globally Lipschitz from the alternate characterization of the approximate information state. This implies that $\eta\big(\psi(\hat{\sigma}_t(m_t),u_t,y_{t+1}),$ $\psi(\hat{\sigma}_t(m_t),u_t,\hat{y}_{t+1})\big) \leq L_{\psi} {\cdot} \eta(y_{t+1},$ $\hat{y}_{t+1})$, and thus 
$\mathcal{H}\big([[C_t, \psi(\hat{\sigma}_t(m_t), u_t, Y_{t+1})|m_t, u_t]], [[C_t, \psi(\hat{\sigma}_t(m_t), u_t, $ $Y_{t+1})|\hat{\sigma}_t(m_t), u_t]] \big) \leq L_{\psi} \max\big\{ \sup_{(c_t,y_{t+1}) \in \mathcal{K}^{\text{ob}}_{t}}$ $\inf_{(\hat{c}_t,\hat{y}_{t+1}) \in \hat{\mathcal{K}}^\text{ob}_{t}} \big(\eta(c_t, \hat{c}_t) + \eta(y_{t+1}, \hat{y}_{t+1}\big), 
\sup_{(\hat{c}_t,\hat{y}_{t+1}) \in \hat{\mathcal{K}}^\text{ob}_{t}}$ $\inf_{(c_t,y_{t+1}) \in \mathcal{K}^{\text{ob}}_{t}}$ $\big(\eta(c_t, \hat{c}_t) + \eta(y_{t+1}, \hat{y}_{t+1}\big)\big\}
= L_{\psi} {\cdot} \mathcal{H}(\mathcal{K}^\text{ob}_{t},\hat{\mathcal{K}}^\text{ob}_{t}) \leq L_{\psi} {\cdot} \delta$. Thus, \eqref{eq_def_approximate_info} is satisfied by selecting $\epsilon = L_{\psi} {\cdot} \delta$.
\end{proof}

\end{document}